\documentclass[12pt,a4paper,leqno]{amsart}

\usepackage[top=1.1in, bottom=1.3in, left=1.5in, right=1.5in]{geometry}

\usepackage[ansinew]{inputenc}
\usepackage[T1]{fontenc}
\usepackage[english]{babel}
\usepackage{amsthm}
\usepackage{amsfonts}         
\usepackage{amsmath}
\usepackage{amssymb}
\usepackage{breqn}
\usepackage{bm}
\usepackage{url}
\usepackage{esint}
\usepackage{fancyhdr}

\newcommand{\C}{\mathbb{C}}

\newcommand{\N}{\mathbb{N}}

\newcommand{\Z}{\mathbb{Z}}

\newcommand{\A}{\mathcal{A}}
\newcommand{\B}{\mathcal{B}}
\newcommand{\CC}{\mathcal{C}}

\newcommand\pprec{\prec\mkern-5mu\prec}

\theoremstyle{plain}
\newtheorem{theorem}{Theorem}
\newtheorem{lemma}[theorem]{Lemma}
\newtheorem{prop}[theorem]{Proposition}

\newtheorem{conj}{Conjecture}

\theoremstyle{remark}
\newtheorem{remark}{Remark}

\addtolength{\hoffset}{-1.15cm}
\addtolength{\textwidth}{2.3cm}
\addtolength{\voffset}{0.45cm}
\addtolength{\textheight}{-0.9cm}
\pagenumbering{arabic}
\numberwithin{equation}{section}
\setcounter{tocdepth}{1}

\begin{document}

\title{On the largest prime factor of $n^2+1$}

\author{Jori Merikoski}

\address{Department of Mathematics and Statistics, University of Turku, FI-20014 University of Turku,
Finland}
\email{jori.e.merikoski@utu.fi}

\begin{abstract}
We show that the largest prime factor of $n^2+1$ is infinitely often greater than $n^{1.279}$. This improves the result of de la Bret\`eche and Drappeau (2020) who obtained this with $1.2182$ in place of $1.279.$ The main new ingredients in the proof are a new Type II estimate and using this estimate by applying Harman's sieve method. To prove the Type II estimate we use the bounds of Deshouillers and Iwaniec on linear forms of Kloosterman sums. We also show that conditionally on Selberg's eigenvalue conjecture the exponent $1.279$ may be increased to $1.312.$
\end{abstract}

\maketitle
\tableofcontents

\section{Introduction}
An outstanding open problem in number theory is to prove that there are infinitely many primes of the form $n^2+1$. To approximate this we may consider the largest prime factor of integers of the form $n^2+1,$ as was done by Chebyshev already in the 19th century (cf. the introduction in \cite{hooley} for the prehistory of this problem). In 1967 Hooley \cite{hooley} proved that the largest prime factor of $n^2+1$ is infinitely often at least $n^{1.10014\dots}$ by applying the Weil bound for Kloosterman sums.  Deshouillers and Iwaniec \cite{DI} showed in 1982 that the largest prime factor of $n^2+1$ is at least $n^{1.202468\dots}$ infinitely often. Their improvement came as an application of their bounds for linear forms of Kloosterman sums \cite{DI2}. In 2020 de la Bret\`eche and Drappeau \cite{BD} improved the exponent to 1.2182 by making use of the result of Kim and Sarnak \cite[Appendix 2]{KS} towards Selberg's eigenvalue conjecture.

We will show a new Type II estimate (Proposition \ref{typeiiprop} below) and use this by applying Harman's sieve method to improve the previous results:
\begin{theorem} \label{maint}
The largest prime factor of $n^2+1$ is greater than $n^{1.279}$ for infinitely many integers $n$.
\end{theorem}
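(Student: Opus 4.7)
The plan is to apply \emph{Harman's alternative sieve} to the sequence $\A_N = (n^2+1)_{n \leq N}$, fed by the new Type~II estimate (Proposition~\ref{typeiiprop}). Fix a parameter $\theta$ slightly below $1.279$. Since $\theta > 1$, a single integer $n^2+1 \leq N^2+1$ has at most one prime factor exceeding $N^\theta$, so the quantity
$$S(N,\theta) := \#\{n \leq N : P^+(n^2+1) > N^\theta\}$$
equals $\sum_{p > N^\theta}\#\{n \leq N : p \mid n^2+1\}$, and it will suffice to prove $S(N,\theta) \gg N/\log N$ for all sufficiently large $N$.

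Next, I would set up the sieve decomposition. Writing $|\A_{N,d}| = \#\{n \leq N : d \mid n^2+1\}$ and $\rho(d) = \#\{x \bmod d : x^2+1 \equiv 0\}$, one has the heuristic main term $|\A_{N,d}| \approx \rho(d)\, N/d$. Harman's method, via an iterated Buchstab identity together with a careful role-reversal between primes and composites in the sifting ranges, decomposes the indicator of ``$a \in \A_N$ has a prime factor in $(N^\theta, 2N^\theta]$'' into a leading main term plus Type~I sums $\sum_{d \leq D_1} \alpha_d\,|\A_{N,d}|$ and Type~II bilinear sums $\sum_{M \leq m < 2M}\sum_{K \leq k < 2K} \alpha_m \beta_k\,|\A_{N,mk}|$, with $MK$ in a prescribed interval and divisor-bounded coefficients. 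The Type~I piece is easy, since $|\A_{N,d}| = \rho(d) N/d + O(\rho(d))$ and $\rho$ is multiplicative with $\rho(p) \leq 2$; after summation by parts the admissible Type~I range reaches essentially $N$.

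The heart of the argument is the Type~II bound, and this is the main obstacle. I would expand $|\A_{N,mk}|$ by completion and Poisson summation, split the residues $x \bmod mk$ via CRT into pairs $(x_1 \bmod m, x_2 \bmod k)$ with $x_i^2 + 1 \equiv 0$, and rearrange the resulting oscillating factors into a \emph{linear form of Kloosterman sums} of shape $\sum_{c,q} \gamma(c,q)\, S(a(c,q), b(c,q); q)$. The Deshouillers--Iwaniec spectral bounds \cite{DI2}, resting on the Kuznetsov trace formula, then furnish the required cancellation, with the length of the admissible bilinear range controlled by current progress (Kim--Sarnak) toward the Selberg eigenvalue conjecture. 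Once Proposition~\ref{typeiiprop} is in place, a numerical optimization of the Harman decomposition parameters against $\theta$ yields $1.279$; inserting the full Selberg conjecture widens the Type~II range and upgrades the exponent to $1.312$, matching the conditional claim of the abstract.
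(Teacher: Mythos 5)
Your high-level sketch of the Type~II estimate (Poisson summation, CRT to split the residues, passage to linear forms in Kloosterman sums, spectral bounds of Deshouillers--Iwaniec with the Kim--Sarnak input) is in the right spirit, but the sieve framework you propose does not match the paper and, more importantly, does not appear to reach $1.279$. You want to lower-bound $S(N,\theta)=\sum_{p>N^\theta}|\A_p|$ directly by a Harman-type decomposition of the indicator of ``$a$ has a prime factor near $N^\theta$.'' Any such lower bound needs asymptotic arithmetic information for moduli of size $\sim N^\theta$, but Proposition~\ref{typeiiprop} only supplies Type~II asymptotics for moduli $P\leq x^{153/128}=x^{1.195\dots}$, and for larger $P$ the only tool available is a linear-sieve \emph{upper} bound. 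With $\theta$ near $1.279$ you are therefore sieving entirely outside the range where the Type~II estimate has any bite, so the negative (discarded) Buchstab terms cannot be controlled and the lower bound collapses.

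The step you are missing is the Chebyshev--Hooley device, which is what turns the problem into one of \emph{upper} bounds, and this is the whole reason the limited Type~II range suffices. The paper starts from the identity
\begin{align*}
S(x)=\sum_{x<p\leq P_x}|\A_p|\log p = X\log x + O(x),
\end{align*}
obtained by summing $\Lambda(m)$ over divisors of $\ell^2+1$. It then bounds $\sum_{x<p\leq x^{1.279}}|\A_p|\log p$ from \emph{above}: using Harman's sieve (which for upper bounds lets one discard the $\epsilon_k=-1$ terms) in the ranges $P\leq x^{153/128}$, combined with the linear sieve upper bound of Lemma~\ref{linearlemma} for $x^{153/128}\leq P\leq x^{1.279}$, it shows the total is $<(1-\epsilon)X\log x$, so a positive fraction of the Chebyshev mass must come from primes $>x^{1.279}$. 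Without this identity, one has no way to convert the upper-bound sieve into the needed conclusion, and no way to exploit the interval $[x^{153/128},x^{1.279}]$ where only the linear sieve is available. Your proposal should be rewritten around the Chebyshev asymptotic \eqref{cheby} and the upper-bound Buchstab decompositions of Section~\ref{buchsec}, rather than a direct lower bound for $S(N,\theta)$.
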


\begin{remark} The proof of Theorem \ref{maint} uses the deep bound of Kim and Sarnak \cite[Appendix 2]{KS}. Using just the classical Selberg's $3/16$-Theorem our argument gives a result with the exponent  $1.279$ replaced by $1.23$.
\end{remark}

We also obtain a new conditional result (improving the exponent $\sqrt{3/2}-\epsilon \geq 1.2247$ of Deshouillers and Iwaniec \cite[Section 8]{DI}):
\begin{theorem} \label{selbergt}
Assuming Selberg's eigenvalue conjecture the exponent $1.279$ in Theorem \ref{maint} may be increased to $1.312$.
\end{theorem}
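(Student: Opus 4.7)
The plan is to follow the same overall strategy used to prove Theorem \ref{maint} but with the inputs from Kim--Sarnak replaced by their (stronger) conjectural versions under Selberg's eigenvalue conjecture, and then re-optimize the sieve decomposition. Concretely, the proof of Theorem \ref{maint} feeds the new Type~II estimate of Proposition \ref{typeiiprop} into Harman's sieve; the Type~II estimate itself is proved by reducing to sums of Kloosterman sums and applying Deshouillers--Iwaniec, where the Kim--Sarnak bound $\theta \le 7/64$ for the exceptional eigenvalues of the Laplacian on congruence surfaces enters through the spectral large sieve. My first step would be to re-examine the proof of Proposition \ref{typeiiprop} and track the role of the parameter $\theta$ in the final range for which the Type~II sum is non-trivial. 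Under Selberg's conjecture we may take $\theta=0$, and this should widen the admissible range of Type~II bilinear variables (the allowed exponents in the smooth dyadic decomposition $\sum_{m \sim M} \sum_{n \sim N} \alpha_m \beta_n \mathbf{1}_{mn \in \A}$).

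Having obtained the enlarged Type~II region, the next step is to plug it into the Harman sieve machinery. Recall that Harman's method represents the indicator of primes in a set $\A$ using Buchstab iterations; each iteration produces either a term handled by asymptotic formulas (Type~I/II) or a ``losing'' term bounded by the asymptotic density times a combinatorial constant. One chooses the decomposition so as to maximize the density of the set $\A$ of shifted primes $p \mid n^2+1$ with $p > n^\alpha$ whose contribution can be demonstrated to be positive. With a larger Type~II region, more Buchstab iterations can be performed cleanly (or the breakpoints can be moved), thereby increasing the range of admissible $\alpha$. I would redo exactly the linear programming / numerical optimization step used for the $1.279$ bound, keeping the same sieve weights and structure, but with the new constraints.

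The main obstacle is the numerical optimization: one must verify that the new feasibility region actually supports $\alpha = 1.312$, which requires carefully re-tuning the parameters $\sigma, \tau, \eta, \dots$ governing the Harman decomposition, ensuring that all intermediate Type~I sums are still covered (the Type~I range should not be affected by $\theta$, so this should be automatic), and confirming that the sum of error terms from each stage of Buchstab iteration is strictly less than the main term. A secondary, mostly bookkeeping obstacle is that the Deshouillers--Iwaniec large sieve inequality for Kloosterman sums contains several terms, and one has to check that the $\theta$-dependent contribution is indeed the dominant one in the relevant range; if some other term becomes the binding constraint, the improvement from Selberg's conjecture saturates and one cannot reach $1.312$. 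Assuming this is not the case, the result follows by direct substitution.

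Finally, I would present the proof as a brief remark at the end of the Harman sieve section, explaining exactly which inequality in the proof of Proposition \ref{typeiiprop} is improved under Selberg's conjecture (the spectral bound for the $\Delta^{1/2-\theta}$-type factor arising in the Kuznetsov formula), and then refer to the same numerical computation used for Theorem \ref{maint} but run with the improved exponents. The output of that computation is the exponent $1.312$ claimed in Theorem \ref{selbergt}.
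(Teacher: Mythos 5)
Your overall plan matches the paper's proof exactly: set $\theta=0$ under Selberg's eigenvalue conjecture, widen the Type~II range, and rerun the Harman/Buchstab bookkeeping with the new breakpoints. The paper does precisely this, obtaining the five ranges $1\leq \alpha < 17/16$, $17/16 \leq \alpha < 8/7$, $8/7 \leq \alpha < 7/6$, $7/6 < \alpha < 5/4$, $\alpha \geq 5/4$ (the $\theta=0$ analogues of the unconditional breakpoints), and verifying numerically that $\varpi = 1.312$ keeps the total deficiency below $1$.

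There is, however, one concrete error in your reasoning: you assert that ``the Type~I range should not be affected by $\theta$, so this should be automatic.'' This is wrong. The Type~I estimate (Proposition~\ref{typeiprop}, from de la Bret\`eche--Drappeau) has level of distribution
\[
D=x^{1/(2-4\theta)-\eta}P^{-\theta/(1-2\theta)},
\]
which depends on $\theta$ throughout. Under Selberg's conjecture this improves to $D=x^{1/2-\eta}$, and this improvement is essential: it enlarges the parameter $U = D x^{1-\alpha-\eta}$ in the Fundamental Propositions and, more importantly, it sharpens the linear sieve upper bound in Lemma~\ref{linearlemma}, which is the \emph{only} tool available in the ranges $7/6 \leq \alpha < 5/4$ (partially) and $\alpha \geq 5/4$ (entirely). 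If you were to keep $D$ fixed at its $\theta=7/64$ value while only updating $\sigma$, the contribution from those upper ranges would not shrink enough and you would not reach $1.312$. In the paper the improved $D$ yields, for instance, $F_5 = 4\int_{7/6}^{5/4}\alpha\,d\alpha = 29/72$ and $4\int_{5/4}^{\varpi}\alpha\,d\alpha$ for the tail, rather than the larger $\theta$-dependent integrands $4(1-2\theta)\alpha/(1-2\theta\alpha)$ used unconditionally. So when you ``rerun the numerical optimization with the new constraints,'' you must update $D$, $U$, $\sigma$, and the linear sieve integrand all simultaneously.
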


\begin{remark} As is usual with Harman's sieve, the exact limit of the method is hard to determine and would require extensive numerical computations. The exponents in both of the above theorems could still be slightly improved by optimizing the sieve more carefully but we do not pursue this issue here for the sake of simplifying presentation.
\end{remark}

\begin{remark} By using similar arguments as in \cite{BD}, \cite{DFI}, and \cite{hooley} it should be possible to generalise our result from $n^2+1$ to polynomials $n^2-d$ where $d$ is not a perfect square. 
\end{remark}

\subsection{Sketch of the proof}
Similarly as in \cite{BD}, \cite{DI}, and \cite{hooley}, we will use Chebyshev's device to detect large prime factors, that is, we use the elementary fact that
\begin{align*}
\sum_{m} \Lambda(m)  \sum_{\substack{\ell \sim x \\ \ell^2+1 \equiv 0 \, \, (m)}} 1 = \sum_{\substack{\ell \sim x }} \sum_{m | \ell^2+1} \Lambda(m) = \sum_{\substack{\ell \sim x }} \log(\ell^2+1) = 2x\log x + O(x)
\end{align*}
so that if $P_x$ denotes the largest prime factor of $\ell^2+1$ for $\ell \sim x$, then
\begin{align*}
 \sum_{p \leq P_x} \log p \sum_{\substack{\ell \sim x \\ \ell^2+1 \equiv 0 \, \, (p)}} 1 \geq (2+o(1)) x\log x. 
\end{align*}
Hence, to get a lower bound for $P_x$ we require upper bounds for sums of the type
\begin{align} \label{heur}
\sum_{p \sim P} \sum_{\substack{\ell \sim x \\ \ell^2+1 \equiv 0 \, \, (p)}} 1,
\end{align}
where $P \leq  x^\varpi $ with $\varpi$ corresponding to the exponent in Theorem \ref{maint}.

Deshouillers and Iwaniec \cite{DI} use a linear sieve upper bound for the sum (\ref{heur}), and the main point in their work is to obtain strong Type I information, that is, asymptotic formulas for sums of the form
\begin{align*}
\sum_{d \leq D} \lambda_d \sum_{\substack{m\sim P \\ m \equiv 0 \,\, (d)}}  \sum_{\substack{\ell \sim x \\ \ell^2+1 \equiv 0 \, \, (m)}} 1,
\end{align*}
where $\lambda_d$ are bounded coefficients. The level of distribution obtained in \cite[Section 7]{DI} is $D=x^{1-\epsilon} P^{-1/2}$, which improved the level $D=x^{1-\epsilon}P^{-3/4}$ in Hooley's work \cite{hooley} (the conditions $m\sim P$ and $\ell \sim x$ need to be replaced by smooth coefficients but let us ignore this detail for the moment). De la Bret\`eche and Drappeau \cite{BD} improve the level of distribution to $D=x^{1/(2-4\theta)-\epsilon}P^{-\theta/(1-2\theta)}$, where $\theta \geq 0$ is any admissible exponent in the Ramanujan-Selberg conjecture. Note that from Selberg's $3/16$-Theorem we know $\theta =1/4$ is admissible which gives the same the level of distribution as in the work of Deshouillers and Iwaniec \cite{DI}. The exponent 1.2182 in \cite{BD} follows from using the result of Kim and Sarnak \cite{KS} that $\theta=7/64$ is admissible.

We will use a combination of Harman's sieve method \cite{harman} and the linear sieve to give an improved upper bound for (\ref{heur}) for some ranges of $P$ (see the beginning of Section \ref{buchsec} for a heuristic explanation of Harman's sieve). Our sieve has similarities also to the sieve used by Duke, Friedlander, and Iwaniec in \cite{DFI}. For the sieve we need to obtain Type II information, that is, an asymptotic formula for sums of the form
\begin{align} \label{typeiiheur}
\sum_{\substack{m \sim M \\ n \sim N}} a_m b_n  \sum_{\substack{\ell \sim x \\ \ell^2+1 \equiv 0 \, \, (mn)}} 1,
\end{align}
where $MN=P$ and $a_m$ and $b_n$ are bounded coefficients. Type II sums of this form are also considered in the works Iwaniec \cite{iwaniec}, Lemke Oliver \cite{lemke} and more recently in \cite[Th\'eor\`eme 5.2]{BD}, but they are not applied to the problem of the largest prime factor of $n^2+1$.  Our Proposition \ref{typeiiprop} gives an improvement on \cite[Th\'eor\`eme 5.2]{BD}. The proof of our Type II estimate is given in Section \ref{typeiisection}. The sieve argument is carried out in Section \ref{sievesection}, using the Type I information proved in \cite{BD}. 

Our proof of the Type II information is inspired by  the arguments in \cite{DI} and \cite{DFI}. The key ingredient in the proof is an estimate for linear forms of Kloosterman sums of the form
\begin{align} \label{sumklooster}
\sum_{r} \sum_{\substack{m \sim \bm{M}\\ n \sim \bm{N}}}A_{m,r} B_{n,r} \sum_{(c,r)=1} g(m,n,c,r) S(m \overline{r},\pm n;c),
\end{align}
for some nice smooth function $g$.  Unfortunately both of the coefficients $A_{m,r}$ and $B_{n,r}$ depend on $r$, so that we are unable to make use of the average over the `level variable' $r$ (cf. \cite[Theorem 10]{DI2} for such a result). Similarly as the results in \cite{BD}, our Type II information will depend on the smallest eigenvalue $\lambda_1(r)=1/4-\theta_r^2$ for the Hecke congruence subgroups $\Gamma_0(r)$ (cf. \cite[Section 1]{DI2} for precise definitions). Selberg's eigenvalue conjecture famously states that $\lambda_1(\Gamma) \geq 1/4$ for any congruence subgroup $\Gamma$. The current best lower bound is the result of Kim and Sarnak \cite[Appendix 2]{KS} that $\lambda_1(\Gamma) \geq 1/4-(7/64)^2$, which we will apply with the estimate of Deshouillers and Iwaniec \cite[Theorem 9]{DI2} to obtain a bound for the sum (\ref{sumklooster}) individually for each $r$. 

For a more detailed sketch of the proof of the Type II estimate we refer to the begininning of Section \ref{typeiisection}. Unfortunately we can handle Type II sums only in the range $P < x^{153/128}$, so that for $x^{153/128} < P < x^\varpi$ we cannot improve on the upper bound of \cite{BD}. Note that even for $P=x^{1+\epsilon}$ a good upper bound for (\ref{heur}) is highly nontrivial, in fact, for $P=x^{1+\epsilon}$ the linear sieve upper bound is off by a factor of $4+O(\epsilon)$. 

In the last section we outline some open problems whose resolution would lead to further progress on the largest prime factor of $n^2+1$.

\subsection{Notations}
We use the following asymptotic notations: for functions $f$ and $g$ with $g$ positive, we write $f \ll g$ or $f= \mathcal{O}(g)$ if there is a constant $C$ such that $|f|  \leq C g.$ The notation $f \asymp g$ means $g \ll f \ll g.$ The constant may depend on some parameter, which is indicated in the subscript (e.g. $\ll_{\epsilon}$).
We write $f=o(g)$ if $f/g \to 0$ for large values of the variable. For variables we write $n \sim N$ meaning $N<n \leq 2N$. 

It is convenient for us to define
\begin{align*}
A \pprec B
\end{align*}
to mean $A \, \ll_\epsilon x^{\epsilon} B.$ A typical bound we use is $\tau_k(n) \pprec 1$  for $n \ll x$, where $\tau_k$ is the $k$-fold divisor function. We say that an arithmetic function $f$ is divisor bounded if $|f(n)| \ll \tau_k(n)$ for some $k$.

We let $\eta >0$ denote a sufficiently small constant, which may be different from place to place. For example, $A \ll x^{-\eta}B$ means that the bound holds for some $\eta >0.$

For a statement $E$ we denote by $1_E$ the characteristic function of that statement. For a set $A$ we use $1_A$ to denote the characteristic function of $A.$ 

We also define $P(w):= \prod_{p\leq w} p,$ where the product is over primes. 

We let $e(x):= e^{2 \pi i x}$ and $e_q(x):= e(x/q)$ for any integer $q \geq 1$. For integers $a,$ $b$, and $q \geq 1$ with $(b,q)=1$ we define $e_{q}(a/b) := e(a\overline{b}/q)$. For Kloosterman sums we use the standard notation
\begin{align*}
S(a,b;c):= \sum_{\substack{n \,\, (c)\\ (n,c)=1}} e_c(an+b/n).
\end{align*}
 
\subsection{Acknowledgements} I am grateful to my supervisor Kaisa Matom\"aki for useful discussions,  comments, and support. I also express my gratitude to Emmanuel Kowalski for helpful discussions. I also wish to thank Philippe Michel for bringing the article \cite{BD} to my attention. I am also grateful for the anonymous referee for comments. During the work the author was funded by UTUGS Graduate School.

\section{The sieve} \label{sievesection}
In this section we will state the arithmetical information (Propositions \ref{typeiprop} and \ref{typeiiprop} below) and apply them with Harman's sieve method \cite{harman} and the linear sieve to give a proof of Theorem \ref{maint}. We also sketch the proof of Theorem \ref{selbergt} by indicating how the proof of Theorem \ref{maint} needs to be modified.

\subsection{Set up} Our notations will be mostly similar to those of \cite{DI}. For $x \geq 1$, let $b$ denote a non-negative $C^{\infty}$-smooth function, supported on $[x,2x]$, whose derivatives satisfy for all $j \geq 0$
\begin{align*}
b^{(j)}(x) \ll_j x^{-j}.
\end{align*}
For any integer $d \geq 1,$ define
\begin{align*}
|\A_d| \,\, := \sum_{n^2+1 \equiv 0 \,\, (d)} b(n) \quad \quad \text{and} \quad\quad X := \int b(\xi) \, d \xi.
\end{align*}

If $P_x$ denotes the greatest prime factor of $\prod_{x \leq n \leq 2x}(n^2+1)$, then by using the Chebysev-Hooley method similarly as in \cite[Section 2]{DI} we find
\begin{align} \label{cheby}
S(x):= \sum_{x < p \leq P_x } |\A_p| \log p = X \log x + O(x).
\end{align}
Therefore, we require an upper bound of $S(x)$ to get a lower bound for $P_x$. We first split the sum using a smooth dyadic partition of unity similarly as in \cite[Section 3]{DI}
\begin{align*}
S(x) = \sum_{\substack{x \leq P \leq P_x \\ P= 2^j x}} S(x,P) + O(x),
\end{align*} 
where
\begin{align*}
S(x,P) = \sum_{P \leq p \leq 4P} \psi_P(p) |\A_p| \log p
\end{align*}
for some $C^{\infty}$-smooth functions $\psi_P$ supported on $[P,4P]$ satisfying $\psi_P^{(\ell)}(\xi) \ll_\ell P^{-\ell}$ for all $\ell\geq 0.$ 

Compared to \cite{BD} and \cite{DI}, we will improve on their upper bound for $S(x,P)$ but only for  $x \leq P < x^{153/128}$. This is because only in this range we are able to prove a new bilinear estimate (Proposition \ref{typeiiprop}). To see how to use this new arithmetic information, we first note that in \cite{BD} and \cite{DI} the upper bound for $S(x)$ is obtained  by using the linear sieve. Since the linear sieve is neutral with respect to applications of Buchstab's identity, we may apply Buchstab's indentity as we please to obtain Type II sums which we now have an asymptotic formula instead of just upper and lower bounds of the linear sieve, thus improving on the linear sieve bound. A similar principle also appears in the sieve of Duke, Friedlander, and Iwaniec in \cite{DFI}. By applying Harman's sieve method the use of the linear sieve can be completely avoided in some ranges (cf. \cite[Sections 3.5 and 3.8]{harman} for further discussion on the relation between Harman's sieve and the linear sieve).

For $P \geq x^{153/128}$ we are unable to obtain new information and we just apply the same argument as in \cite[Section 8]{DI} to get an upper bound for $S(x,P)$. In the end we sum over the dyadic ranges $x \leq P \leq x^\varpi$ to determine the largest $\varpi$ for which we can show that
\begin{align*}
\sum_{x < p \leq x^\varpi} |\A_p| \log p \leq (1-\epsilon) X \log x.
\end{align*}

As usual with Harman's sieve method, we have to calculate numerical upper bounds for multi-dimensional integrals. These integrals are computed using Python 3.7, and the links to the codes can be found at the end of this section.

\subsection{Arithmetic information}
Let us define
\begin{align*}
\rho(m):= | \{ \nu \in \Z/m\Z: \,\, \nu^2+1 \equiv 0 \,\, (m) \}  |.
\end{align*}
\begin{remark} In \cite{DI} this is denoted by $\omega(m)$ but we reserve the symbol $\omega$ for the Buchstab function.
\end{remark}

We say that an arithmetic function $f(n)$ is divisor bounded if $|f(n)|  \ll \tau_k(n)$ for some $k \geq 1.$ From the work of de la Bret\`eche and Drappeau we know the following linear estimate \cite[Section 8.4]{BD} (it is stated there for bounded coefficients $\lambda_d$ but the same holds also for coefficients which are divisor bounded since the saving in the error term is a power of $x$). 
\begin{prop} \emph{(Type I information, de la Bret\`eche-Drappeau).} \label{typeiprop} Let $\theta=7/64$. Let $x \leq P=x^\alpha \leq x^{2-\eta}$ and
\begin{align*}
D:=x^{1/(2-4\theta)-\eta}P^{-\theta/(1-2\theta)}= x^{(1-2\theta\alpha)/(2-4\theta)-\eta}=x^{(32-7\alpha)/50-\eta}.
\end{align*}
Suppose that $D \ll x^{2-\eta}/P.$ Let $\lambda_d$ be any divisor bounded coefficients. Then
\begin{align*}
\sum_{d \leq D} \lambda_d \sum_{m \equiv 0 \,\, (d)} |\A_m| \psi_P(m) \log m = X \sum_{d \leq D} \lambda_d \sum_{m \equiv 0 \,\, (d)} \frac{\rho(m)}{m} \psi_P(m) \log m + O (x^{1-\eta}).
\end{align*}
\end{prop}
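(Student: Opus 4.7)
The plan follows \cite[Section 8.4]{BD}. First I would write $m = dk$ and apply Poisson summation in $n$ to each residue class $n \equiv \nu \pmod m$, where $\nu$ ranges over the $\rho(m)$ solutions of $\nu^2+1 \equiv 0 \pmod m$. This yields the identity
\begin{align*}
|\A_m| = \frac{X \rho(m)}{m} + \frac{1}{m} \sum_{h \neq 0} \hat b\!\left(\tfrac{h}{m}\right) T_m(h), \qquad T_m(h) := \sum_{\nu^2 \equiv -1 \, (m)} e_m(h\nu),
\end{align*}
in which the $h=0$ frequency produces exactly the expected main term. The smoothness of $b$ restricts the effective range to $|h| \pprec m/x \pprec P/x$, so the task reduces to showing that the contribution of $h \neq 0$, summed against $\lambda_d \psi_P(dk) \log(dk)/(dk)$, is $O(x^{1-\eta})$.

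Next I would apply the Chinese Remainder Theorem to split $T_{dk}(h)$ into exponential sums modulo the coprime factors $d$ and $k$, and parameterise the roots of $x^2 \equiv -1$ by coprime factorisations of the modulus. A standard manipulation then rewrites $T_k(h)$ as a sum of Kloosterman (or Sali\'e-type) sums to moduli dividing $k$. Substituting this decomposition into the error term
\begin{align*}
\mathcal{R} := \sum_{d \leq D} \lambda_d \sum_{k} \psi_P(dk) \log(dk) \cdot \frac{1}{dk} \sum_{h \neq 0} \hat b\!\left(\tfrac{h}{dk}\right) T_{dk}(h)
\end{align*}
and separating variables, the inner multiple sum over $h$ and $k$ with $d$ fixed takes the shape of a linear form of Kloosterman sums of the type treated by Deshouillers--Iwaniec \cite{DI2}.

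The crucial step is then to invoke \cite[Theorem 2]{DI2}, refined as in \cite{BD}, whose dependence on the smallest Laplace eigenvalue $\lambda_1(\Gamma_0(r)) = 1/4 - \theta_r^2$ is controlled by the Kim--Sarnak bound $\theta \leq 7/64$. Optimising the spectral estimate against the trivial bound and summing over dyadic scales in $k$ and $h$ produces exactly the threshold $D = x^{(1-2\theta\alpha)/(2-4\theta)-\eta}$ claimed, the small $x^\eta$ margin absorbing the losses from the dyadic partitions, divisor-bounded coefficients (via $\tau_k(d) \pprec 1$), and the coprimality adjustments in the CRT factorisation. The main obstacle is precisely this last input: the level is rigidly pinned by the quality of the available bound on $\theta$, and only the Kim--Sarnak improvement beyond Selberg's $3/16$-theorem enables the gain as $P$ grows, which is why $\theta = 7/64$ appears explicitly in the statement.
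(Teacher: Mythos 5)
The paper does not actually prove this proposition; it is imported verbatim from \cite[Section 8.4]{BD} as Type I input, so there is no internal proof to compare your sketch against. That said, your outline is broadly the right shape for what \cite{BD} (building on \cite[Section 7]{DI}) does: Poisson summation to isolate the main term and a frequency-error term, a parametrization of the roots of $\nu^2\equiv -1$, completion into Kloosterman sums, and then the spectral large-sieve-type bounds of \cite{DI2} sharpened by the Kim--Sarnak exponent.

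However, two of your intermediate steps are described loosely enough to be misleading. First, the parametrization used is not a ``CRT factorisation of the modulus'': it is Gauss's two-squares reciprocity (the paper's Lemma~\ref{gauss}), which writes $m=r^2+s^2$ with $(r,s)=1$ and gives $\nu/m \equiv r/(s m) - \bar r/s \pmod 1$. Second, $T_m(h)$ \emph{by itself} is not a Kloosterman or Sali\'e sum --- it has only $\rho(m)=O(x^\epsilon)$ terms. Kloosterman sums only appear after you trade the long $m$-average for a two-dimensional $(r,s)$-lattice sum via Gauss's lemma and then complete the (incomplete) sum in one of $r,s$ by Poisson, exactly as in \cite[Section 5]{DI}; it is at that point that the averaged Kloosterman-sum estimates (for the Type I case, a version of \cite{DI2} with averaging over the level, not \cite[Theorem 2]{DI2}, together with the Kim--Sarnak refinement worked out in \cite{BD}) give the stated level $D=x^{(1-2\theta\alpha)/(2-4\theta)-\eta}$. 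With those corrections your sketch would be an accurate summary of the cited argument, but as written the mechanism by which Kloosterman sums enter is wrong.
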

\begin{remark} We use $\eta$ to denote a positive constant which can be taken to be arbitrarily small and which may be different from place to place (similarly as in \cite{harman}). Hence, the above proposition says that for every small $\eta_1>0$ there exists $\eta_2>0$ such that if $P\leq x^{2-\eta_1}$ and $D=x^{(32-7\alpha)/50-\eta_1} \ll x^{2-\eta_1}/P$, then the claimed asymptotic formula holds with an error term $O(x^{1-\eta_2})$. The exact dependence between the various $\eta$'s which appear is irrelevant in our arguments.
\end{remark}

In Section \ref{typeiisection} we will show the following bilinear estimate which improves on \cite[Th\'eor\`eme 5.2]{BD}:
\begin{prop} \emph{(Type II information).} \label{typeiiprop} Let $\theta= 7/64.$ Let $P=x^\alpha$ for some $\alpha \geq 1$, and let $MN=P$ for $M,N \geq 1$. Let $a_m$ and $b_n$ be divisor bounded coefficients such that $b_n$ is supported on square-free integers.  Then
\begin{align*}
\sum_{\substack{m \sim M \\ n \sim N}} a_m b_n |\A_{mn}| \psi_P(mn) \log mn = X \sum_{\substack{m \sim M \\ n \sim N}}   \frac{a_m b_n \rho(mn)}{mn} \psi_P(mn) \log mn  + O (x^{1-\eta}).
\end{align*}
if one of the following holds:
\\
\textbf{\emph{(i)}} 
\begin{align*}
x^{\alpha-1+\eta} \ll N \ll  x^{(2-2\theta-\alpha)/3-\eta}=x^{(57-32\alpha)/96 - \eta}  .
\end{align*}
\\
\textbf{\emph{(ii)}}\emph{(Duke-Friedlander-Iwaniec+de la Bret\`eche-Drappeau)}  $b_n$ is supported on primes and
\begin{align*}
x^{2(\alpha-1)+\eta} \ll N \ll  x^{(4-(3+2\theta)\alpha)/(3-6\theta)}=x^{(128-103\alpha)/75-\eta}. 
\end{align*}
\end{prop}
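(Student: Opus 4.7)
The plan is to follow the strategy sketched in the introduction: open $|\A_{mn}|$ by Poisson summation, extract the main term from the zero frequency, and reduce the nonzero frequencies to linear forms of Kloosterman sums of the shape (\ref{sumklooster}). The deep input will then be \cite[Theorem 9]{DI2} combined pointwise in the level variable with the Kim--Sarnak bound $\theta \leq 7/64$.

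First I would write
\begin{equation*}
|\A_{mn}| = \sum_{\substack{\nu\,(mn)\\ \nu^2+1 \equiv 0\,(mn)}} \sum_{\ell \equiv \nu\,(mn)} b(\ell)
\end{equation*}
and apply Poisson summation to the inner $\ell$-sum. The $h=0$ frequency produces $X\rho(mn)/(mn)$, which after weighting by $a_m b_n \psi_P(mn) \log mn$ and summing reproduces exactly the main term of the proposition. Using the square-freeness of $b_n$ together with the negligible contribution of pairs with $(m,n) > 1$, I would restrict to coprime $m,n$ and CRT-factor $\nu \equiv m\overline{m}_n \nu_n + n\overline{n}_m \nu_m \pmod{mn}$. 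The identities $\overline{\nu_m} \equiv -\nu_m\,(m)$ and $\overline{\nu_n} \equiv -\nu_n\,(n)$ then convert the inner exponential sums into sums of Kloosterman type. The rapid decay of $\widehat{b}$ confines the relevant Fourier variable to $|h| \pprec mn/x$, which is the essential support condition for what follows.

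The next step is the reduction to (\ref{sumklooster}) by the standard maneuvers of \cite[Section 4]{DI} and \cite[Section 5]{BD}: factor $h = rq$ by extracting an appropriate greatest common divisor (the level variable $r$), apply Cauchy--Schwarz in whichever of $m,n$ is longer, and unfold the resulting square. One arrives at a sum of the form
\begin{equation*}
\sum_{r} \sum_{\substack{m \sim \bm{M}\\ n \sim \bm{N}}} A_{m,r} B_{n,r} \sum_{(c,r)=1} g(m,n,c,r)\, S(m\overline{r},\pm n;c)
\end{equation*}
with divisor-bounded coefficients and a smooth weight $g$ in $c$. Since both $A_{m,r}$ and $B_{n,r}$ depend on $r$, no averaging over $r$ is available, so I would apply \cite[Theorem 9]{DI2} pointwise in $r$ with $\theta_r \leq 7/64$, and check that matching the resulting error to $x^{1-\eta}$ yields precisely the range in (i), i.e.\ $N \ll x^{(57-32\alpha)/96 - \eta}$.

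For part (ii) the primality assumption on $\mathrm{supp}\,b_n$ lets me invoke the Duke--Friedlander--Iwaniec refinement of \cite{DFI}, combined with the spectral improvement of \cite{BD}: one trivially discards the small diagonal $(m,n)>1$, then performs an additional reciprocity/inversion step before Cauchy--Schwarz that effectively replaces one factor of $N$ by $N^{1/2}$ in the bilinear bound. This opens the enlarged range $N \ll x^{(128-103\alpha)/75 - \eta}$. The main obstacle I expect is not the Kloosterman input itself but the bookkeeping that precedes it: the reduction to the precise form (\ref{sumklooster}) must produce $r$-dependent coefficients that are genuinely divisor bounded (so \cite[Theorem 9]{DI2} applies without loss), no parasitic $x^{\eta}$ factors must accumulate through the various GCD extractions and Cauchy--Schwarz steps, and the Kim--Sarnak bound must be plugged in so that the arithmetic saving saturates exactly at the announced cutoffs.
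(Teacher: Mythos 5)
Your high-level outline agrees with the paper's strategy for part (i) --- Poisson summation to extract the main term and reduce to an exponential sum, Cauchy--Schwarz, and then \cite[Theorem 9]{DI2} applied pointwise in the level with the Kim--Sarnak bound --- but the crucial mechanism by which Kloosterman sums appear is missing and the one you describe is incorrect. You propose to ``factor $h=rq$ by extracting an appropriate greatest common divisor (the level variable $r$)'' and to convert the exponential sums to Kloosterman type via the CRT identity $\overline{\nu_m}\equiv -\nu_m\ (m)$. Neither of these matches what is needed. In the paper the level variable fed into \cite[Theorem 9]{DI2} is $\varrho=\delta k^2 n_0 n_1 n_2\asymp N^2/n_0$; it is the modulus that emerges after expanding the square in Cauchy--Schwarz (with $n_0=(n_1,n_2)$) and has nothing to do with a GCD extracted from $h$. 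The Kloosterman sums themselves arise from Gauss's lemma (Lemma~\ref{gauss}): a solution of $\nu^2+1\equiv 0\ (\ell)$ corresponds to a representation $\ell=r^2+s^2$ with $\nu/\ell\equiv r/(s\ell)-\overline{r}/s\ \mathrm{mod}\ 1$, and only after substituting this, separating the ``non-arithmetic'' part, and completing the $r$-sum (Lemma~\ref{complete}) does one obtain $S(m\overline{\varrho},\pm n;c)$ with a smooth weight in $c$. Without this sum-of-two-squares step your reduction would not produce the genuine Kloosterman sums of modulus $c$ that \cite[Theorem 9]{DI2} requires.

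There is also a subtlety before Cauchy--Schwarz that your sketch glosses over: the naive CRT factorization of $\nu\ (mn)$ leaves the $\nu$-sum tied to $n$, which is the variable you want inside the square. The paper avoids this by exploiting the square-freeness of $b_n$ to replace the condition $\nu^2+1\equiv 0\ (mn)$ by $\nu^2+1\equiv 0\ (mQ)$ with $Q=Q(m)$ a product of primes up to $2N$, at the cost of a factor $\rho(n)/\rho(Q)$; this is what makes the $\nu$-sum independent of $n$ and lets one apply Cauchy--Schwarz in $m$. Finally, your description of part (ii) does not match the paper: there is no ``additional reciprocity/inversion step before Cauchy--Schwarz.'' Part (ii) is proved by restricting to $(m,n)=1$ (negligible error since $b_n$ is supported on primes), applying Cauchy--Schwarz with the sum over $h$ on the outside, and then invoking Lemma~\ref{bdlemma} (the de la Bret\`eche--Drappeau form of \cite[Proposition~4]{DFI}) directly for each fixed pair $(n_1,n_2)$; the $N^{1-2\theta}M^{1/2+\theta}$ saving is built into that lemma, not generated by a separate reciprocity step.
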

\begin{remark} The part (i) gives a non-trivial range for $N$ if $\alpha < 5/4-\theta/2 =153/128=1.195\dots$
\end{remark}
\begin{remark} The exponent $\theta=7/64$ corresponds to the smallest eigenvalues $\lambda_1(q)$ on the Hecke congruence subgroups $\Gamma_0(q), \, q\geq 1,$ by $\lambda_1(q)= 1/4-\theta_q^2$ (cf. \cite[Section 1]{DI2} for precise definitions). Under Selberg's eigenvalue conjecture we could set $\theta=0.$ That $\theta_q \leq 7/64$ follows from a deep result of Kim and Sarnak \cite[Appendix 2]{KS}.
\end{remark}

\begin{remark} The part (ii) is almost a direct consequence of combining the argument in \cite[Section 5]{DFI} with \cite[Lemme 8.3, part 1]{BD}. The upper limit is better than (i) only in the range $\alpha < 2671/2496 = 1.070\dots$. Notice that for $\theta=0$ our part (i) gives a better result in the full range.
\end{remark}

\begin{remark} By similar arguments as in \cite{iwaniec} and \cite{lemke}, in \cite[Th\'eor\`eme 5.2]{BD} de la Bret\`eche and Drappeau use the dispersion method to handle Type II sums for
\begin{align*}
x^{\alpha-1+\eta} \ll N \ll x^{\alpha(1-2\theta)/(7-6\theta)-\eta}
\end{align*}
but this is weaker than Proposition \ref{typeiiprop}(i).
\end{remark}

\subsection{Fundamental Proposition}
For integers $d \geq 1$ denote
\begin{align} \label{asieve}
S(\A(P)_d,z) := \sum_{\substack{(n,P(z))=1}} |\A_{dn}| \psi_P(dn) \log (dn),
\end{align}
so that (denoting $\A(P)=\A(P)_1$ when $d=1$)
\begin{align*}
S(x,P) = S(\A(P), 2 \sqrt{P}).
\end{align*}
Let us also define the expected value of $S(\A(P)_d,z)$
\begin{align} \label{bsieve}
S(\B(P)_d,z) := X \sum_{(n,P(z))=1} \frac{\rho(dn)}{dn} \psi_P(dn) \log(dn).
\end{align}
For $d=1$ denote $\B(P)=\B(P)_1$.

For the next Proposition we note that $(2-2\theta-\alpha)/3 > 2(\alpha-1)$ exactly if $\alpha < 249/224 =1.11\dots$ We can combine Propositions \ref{typeiprop} and \ref{typeiiprop} by using a variant of the argument in \cite[Chapter 3]{harman} to get
\begin{prop} \emph{(Fundamental Proposition I).} \label{funprop} Let $P=x^\alpha$ for $1\leq \alpha < 249/224-2\eta.$ Let $D$ be as in Proposition \ref{typeiprop} and set
\begin{align*}
 U:=Dx^{1-\alpha-\eta}=x^{(1-2\theta\alpha)/(2-4\theta)-\alpha+1-2\eta},
\end{align*}
and
\begin{align*}
\sigma := \max \bigg\{\frac{2-2\theta-\alpha}{3}-\eta, \frac{4-(3+2\theta)\alpha}{3-6\theta}-\eta\bigg \}.
\end{align*}
Let $\lambda_u$ be divisor-bounded coefficients. Then
\begin{align*}
\sum_{u \leq U } \lambda_u S(\A(P)_u,x^\sigma) = \sum_{u \leq U} \lambda_u S(\B(P)_u,x^\sigma) + O(x^{1-\eta}).
\end{align*}
\end{prop}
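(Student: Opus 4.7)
The plan is to adapt Harman's sieve methodology from \cite[Chapter 3]{harman}: decompose $\sum_{u \leq U} \lambda_u S(\A(P)_u, x^\sigma)$ via repeated applications of Buchstab's identity into pieces that are individually evaluable through either Proposition \ref{typeiprop} (Type I) or Proposition \ref{typeiiprop} (Type II). A first preliminary observation I would exploit is that since $\alpha \geq 1$, we have $U = Dx^{1-\alpha-\eta} \leq D$, so the outer variable $u \leq U$ already lies in the Type I range, and by construction $x^\sigma$ equals the larger of the two Type II upper bounds.

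Concretely, I would iterate Buchstab's identity
\begin{align*}
S(\A, z_1) = S(\A, z_2) + \sum_{z_1 < p \le z_2} S(\A_p, p) \qquad (z_1 \le z_2),
\end{align*}
starting from $S(\A(P)_u, x^\sigma)$, with carefully chosen intermediate sifting levels so that the resulting terms take the shape
\begin{align*}
\sum_{u \leq U}\lambda_u \sum_{p_1, \ldots, p_r} \lambda'_{p_1,\ldots,p_r}\, S(\A(P)_{u p_1 \cdots p_r}, z^*),
\end{align*}
with prescribed constraints on the $p_i$ and on $z^*$. For each such term I would choose one of three modes of evaluation: (a) treat it as a Type I sum by taking $d := u p_1 \cdots p_j \leq D$ and applying Proposition \ref{typeiprop}; (b) regroup it as a Type II sum $\sum_{m,n} a_m b_n |\A_{mn}|$ in which some sub-product of the $p_i$ has total size in $[x^{\alpha-1+\eta}, x^{(57-32\alpha)/96 - \eta}]$, invoking Proposition \ref{typeiiprop}(i); or (c) isolate a single extracted prime in the Type II(ii) range $[x^{2(\alpha-1)+\eta}, x^{(128-103\alpha)/75 - \eta}]$ and invoke Proposition \ref{typeiiprop}(ii). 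In each case the main term built from the expected density $X\rho(\cdot)/\cdot$ matches the corresponding main term in $\sum_u \lambda_u S(\B(P)_u, x^\sigma)$ up to an admissible error.

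The hard part will be verifying that the Buchstab iteration can be organized so that every produced term admits one of the three modes of evaluation---equivalently, that the Type I and Type II(i)/(ii) regions together tile the space of admissible size-vectors for $(u, p_1, \ldots, p_r)$ without leaving a gap. The threshold $\alpha < 249/224 - 2\eta$ encodes the inequality $(2-2\theta-\alpha)/3 > 2(\alpha-1)$, which ensures that the Type II(i) upper bound strictly exceeds the Type II(ii) lower bound. This compatibility is what allows Types II(i) and II(ii) to jointly cover the primes between $x^{2(\alpha-1)}$ and $x^\sigma$ that Type I alone cannot reach, so that the final decomposition is complete. Executing this case analysis carefully---in the style of \cite[Sections 3.5 and 3.8]{harman} and the sieve decomposition of \cite{DFI}---is the central technical task.
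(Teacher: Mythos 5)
Your high-level plan is in the right direction, but you have explicitly punted on the step that constitutes the actual content of the proof, and the device that closes the gap is not visible in your proposal. The paper does not actually use a Buchstab cascade here; it expands $1_{(n,P(x^\sigma))=1}$ directly via M\"obius, splitting $\sum_{d\mid P(x^\sigma)}\mu(d)$ at $d\leq x^{\alpha-1+\eta}$ (Type I, since $du\leq x^{\alpha-1+\eta}U=D$) versus $d>x^{\alpha-1+\eta}$ (Type II). The essential point you are missing is how the latter range is forced into Proposition~\ref{typeiiprop}: writing $d=q_1\cdots q_k$ with $q_k<\cdots<q_1\leq x^\sigma$, one picks the unique \emph{first exceedance} index $\ell$ with $q_1\cdots q_{\ell-1}<x^{\alpha-1+\eta}\leq q_1\cdots q_\ell$ and sets $n'=q_1\cdots q_\ell$. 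Then either $\ell=1$, in which case $n'=q_1$ is a single prime in $[x^{\alpha-1+\eta},x^\sigma]$ and parts (i) and (ii) of Proposition~\ref{typeiiprop} jointly cover this interval; or $\ell>1$, in which case $n'<x^{\alpha-1+\eta}q_\ell<x^{2(\alpha-1)+2\eta}$ (using $q_\ell<q_1<x^{\alpha-1+\eta}$), and the hypothesis $\alpha<249/224-2\eta$ is precisely what ensures $x^{2(\alpha-1)+2\eta}\leq x^{(2-2\theta-\alpha)/3-\eta}$, so that $n'$ falls into the part~(i) range. Your reading of the threshold $249/224$ (overlap of the part~(i) upper limit with the part~(ii) lower limit) is one of the two ways it is used, but the more delicate use is the $\ell>1$ case: it guarantees that a product of several primes, \emph{each} individually below $x^{\alpha-1+\eta}$ and hence outside any Type~II range, nevertheless has a prefix inside the Type~II(i) window. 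Your proposal says ``some sub-product of the $p_i$ has total size in'' the admissible interval, but gives no argument that such a sub-product exists; that is exactly the load-bearing step. You also omit the removal of the cross-condition $q_\ell<q_{\ell+1}$ (a Perron/Mellin separation, needed to make the coefficients $a_m, b_{n'}$ genuinely bilinear), though that is a more routine matter.
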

\begin{proof}
Using the M\"obius function to detect $(n,P(x^\sigma))=1$, we have 
\begin{align*}
\sum_{u \leq U } \lambda_u S(\A(P)_u,x^\sigma)  &= \sum_{u \leq U } \sum_{d | P(x^\sigma)} \sum_n \lambda_u \mu(d) |\A_{udn}| \psi_P(udn)\log(udn) \\
&= \Sigma_I(\A(P)) + \Sigma_{II}(\A(P)), 
\end{align*}
where
\begin{align*}
 \Sigma_I(\A) &:= \sum_{u \leq U } \sum_{\substack{d | P(x^\sigma)\\ d \leq x^{\alpha-1+\eta}} } \sum_n \lambda_u \mu(d) |\A_{udn}| \psi_P(udn)\log(udn) \quad \quad \text{and} \\
 \Sigma_{II}(\A) & := \sum_{u \leq U } \sum_{\substack{d | P(x^\sigma)\\ d > x^{\alpha-1+\eta}} } \sum_n \lambda_u \mu(d) |\A_{udn}| \psi_P(udn)\log(udn).
\end{align*}
 Similarly, we can write
\begin{align*}
\sum_{u \leq U } \lambda_u S(\B(P)_u,x^\sigma) =\Sigma_I(\B(P)) + \Sigma_{II}(\B(P)).
\end{align*}

For the first pair of sums, since $du \leq x^{\alpha-1+\eta}U = D,$ we have by Proposition \ref{typeiprop}
\begin{align*}
\Sigma_I(\A(P)) = \Sigma_I(\B(P))  + O(x^{1-\eta}).
\end{align*}

In the second pair of sums we have (writing $d=q_1q_2\cdots q_k$)
\begin{align*}
\Sigma_{II}(\A(P)) = \sum_{k \ll \log x}(-1)^k \sum_{u \leq U} \sum_{\substack{q_k < \cdots < q_1 \leq x^{\sigma} \\ q_1 \cdots q_k > x^{\alpha-1+\eta} }} \lambda_u |\A_{uq_1 \cdots q_k n}| \psi_P(uq_1 \cdots q_k n)\log(u q_1 \cdots q_kn).
\end{align*}
For every $q_1\cdots q_k$ there exists a unique $\ell \leq k$ such that
\begin{align*}
q_1 \cdots q_\ell \geq x^{\alpha-1+\eta} \quad \quad \text{and} \quad \quad q_1 \cdots q_{\ell-1} < x^{\alpha-1+\eta}.
\end{align*}
Hence, writing $n':=q_1 \cdots q_\ell$ and $m:=un q_{\ell+1} \cdots q_k$, and using Perron's formula to remove the cross-condition $q_\ell < q_{\ell+1}$ (cf. \cite[Chapter 3.2]{harman}),  we can partition  $\Sigma_{II}(\A(P))$ into
\begin{align*}
 \sum_{k \ll \log x}(-1)^k \sum_{\ell \leq k} \sum_m \sum_{\substack{n'=q_1 \cdots q_\ell \geq x^{\alpha-1+\eta} \\ q_1 \cdots q_{\ell-1} < x^{\alpha-1+\eta} \\ q_\ell < \cdots < q_1 \leq x^\sigma}} a_m b_{n'} |\A_{mn'}| \psi_P(mn') \log mn'
\end{align*}
with $b_{n'}$ supported on square-free integers. A similar partition applies to $\Sigma_{II}(\B(P))$. 

If $\ell =1$, then $x^{\alpha-1+\eta} \leq q_1 \leq x^\sigma$, so that we have an asymptotic formula by combining Proposition \ref{typeiiprop}(i) and (ii) if $\alpha < 2671/2496$, and for $\alpha \geq 2671/2496$ simply using part (i).

If $\ell > 1$, then we have $q_1 \cdots q_\ell \leq x^{\alpha-1+\eta} q_\ell \leq x^{(2-2\theta-\alpha)/3-\eta}$ (since $q_\ell < q_1< x^{\alpha-1+\eta}$ and $2(\alpha-1) < (2-2\theta-\alpha)/3-3\eta$ for $\alpha < 249/224-2\eta$), so that we may apply  Proposition \ref{typeiiprop}(i) to get an asymptotic formula. Summing over $\ell$ and $k$ we obtain
\begin{align*}
\Sigma_{II}(\A(P)) = \Sigma_{II}(\B(P)) + O(x^{1-\eta}).
\end{align*} 
\end{proof}

We note that $(2-2\theta-\alpha)/3>\alpha-1$ precisely if $\alpha < 153/128.$ By a similar argument we obtain the following variant of the previous proposition 
\begin{prop} \emph{(Fundamental Proposition II).} \label{funprop2} Let $P=x^\alpha$ for $1\leq \alpha < 153/128-2\eta.$ Let $D$ be as in Proposition \ref{typeiprop} and set
\begin{align*}
 U:=Dx^{1-\alpha-\eta}=x^{(1-2\theta\alpha)/(2-4\theta)-\alpha+1-2\eta},
\end{align*}
and
\begin{align*}
\gamma := \frac{2-2\theta-\alpha}{3} - \alpha+1 - 2\eta.
\end{align*}
Let $\lambda_u$ be divisor-bounded coefficients. Then
\begin{align*}
\sum_{u \leq U } \lambda_u S(\A(P)_u,x^\gamma) = \sum_{u \leq U} \lambda_u S(\B(P)_u,x^\gamma) + O(x^{1-\eta}).
\end{align*}
\end{prop}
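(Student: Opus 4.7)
The plan is to mirror the proof of Proposition \ref{funprop} exactly, but exploit the smaller sieve parameter $z=x^\gamma$ to avoid ever invoking Proposition \ref{typeiiprop}(ii); only part (i) will be used. First I would use M\"obius inversion to write
\begin{align*}
\sum_{u \leq U}\lambda_u S(\A(P)_u,x^\gamma) \;=\; \Sigma_I(\A(P)) \,+\, \Sigma_{II}(\A(P)),
\end{align*}
where $\Sigma_I$ collects those terms with $d\mid P(x^\gamma)$ satisfying $d\leq x^{\alpha-1+\eta}$, and $\Sigma_{II}$ collects the remainder $d>x^{\alpha-1+\eta}$, and similarly for $\B(P)$.

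The Type I piece is immediate: since $du\leq x^{\alpha-1+\eta}U = D$ and $\mu(d)\lambda_u$ is still divisor-bounded, Proposition \ref{typeiprop} yields
\begin{align*}
\Sigma_I(\A(P)) \;=\; \Sigma_I(\B(P)) + O(x^{1-\eta}).
\end{align*}

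For the Type II piece I would carry out the same decomposition as in Proposition \ref{funprop}: for each squarefree $d=q_1\cdots q_k$ with primes $q_k<\cdots<q_1\leq x^\gamma$ and $q_1\cdots q_k>x^{\alpha-1+\eta}$, define $\ell=\ell(d)\leq k$ to be minimal with $q_1\cdots q_\ell\geq x^{\alpha-1+\eta}$, and split $n'=q_1\cdots q_\ell$, $m=un q_{\ell+1}\cdots q_k$, removing the cross-condition $q_\ell<q_{\ell+1}$ with Perron's formula as in \cite[Chapter 3.2]{harman}. The decisive observation -- and the only place the hypothesis $\alpha<153/128-2\eta$ enters -- is the automatic bilinear range: by the minimality of $\ell$ one has $q_1\cdots q_{\ell-1}<x^{\alpha-1+\eta}$ and $q_\ell\leq x^\gamma$, hence
\begin{align*}
x^{\alpha-1+\eta} \;\leq\; n' \;=\; q_1\cdots q_\ell \;<\; x^{\alpha-1+\eta}\cdot x^\gamma \;=\; x^{(2-2\theta-\alpha)/3-\eta},
\end{align*}
so $n'$ lands squarely in the window of Proposition \ref{typeiiprop}(i), regardless of the value of $\ell$. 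Consequently the asymptotic formula of Proposition \ref{typeiiprop}(i) applies uniformly, giving $\Sigma_{II}(\A(P))=\Sigma_{II}(\B(P))+O(x^{1-\eta})$ after summing over $k\ll \log x$ and $\ell\leq k$.

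The only real subtlety is checking that the window $[x^{\alpha-1+\eta},x^{(2-2\theta-\alpha)/3-\eta}]$ is non-empty, which is precisely the hypothesis $\alpha<153/128-2\eta$; everything else is routine bookkeeping identical to Proposition \ref{funprop}. Note that because $\gamma<\alpha-1+\eta$ may occur in part of the range $\alpha<153/128$, the case $\ell=1$ can simply be empty, and this causes no issue: we never needed Proposition \ref{typeiiprop}(ii) in the first place, since the product $n'$ was trimmed to the $(2-2\theta-\alpha)/3$-barrier automatically by the sieve cut-off $z=x^\gamma$. This is exactly the trade-off making Proposition \ref{funprop2} valid in the larger range $\alpha<153/128$ at the cost of the smaller sieve parameter $x^\gamma$.
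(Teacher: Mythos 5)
Your proof is correct and follows exactly the same route as the paper: you mirror the decomposition from Proposition \ref{funprop}, apply Proposition \ref{typeiprop} to the $d\leq x^{\alpha-1+\eta}$ piece, and for $d>x^{\alpha-1+\eta}$ observe that minimality of $\ell$ together with the sieve cutoff $q_\ell\leq x^\gamma$ forces $n'=q_1\cdots q_\ell<x^{\alpha-1+\eta+\gamma}=x^{(2-2\theta-\alpha)/3-\eta}$, so Proposition \ref{typeiiprop}(i) suffices. Your added remark that the smaller sieve parameter makes part (ii) unnecessary (and that $\ell=1$ may be vacuous) is a correct and useful clarification of the same argument.
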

\begin{proof}
The only difference to the proof of Proposition \ref{funprop} is that this time in $\Sigma_{II}(\A(P))$ combining
\begin{align*}
q_1 \cdots q_\ell \geq x^{\alpha-1+\eta} \quad \quad \text{and} \quad \quad q_1 \cdots q_{\ell-1} < x^{\alpha-1+\eta}
\end{align*}
with $q_\ell < x^\gamma$ we get $q_1\cdots q_\ell < x^{\alpha-1+\eta+\gamma} <  x^{(2-2\theta-\alpha)/3  -\eta}$, so that we may use Proposition \ref{typeiiprop}(i) to get an asymptotic formula.
\end{proof}

We also need a lemma for transforming sums over almost-primes into integrals which can be evaluated numerically. Let $\omega(u)$ denote the Buchstab function (cf. \cite[Chapter 1]{harman} for the properties below, for instance), so that by the Prime Number Theorem for $y^{\epsilon} < z < y$ 
\begin{align} \label{buchasymp}
\sum_{y < n \leq 2y} 1_{(n,P(z))=1} = (1+o(1)) \omega \left(\frac{\log y}{\log z} \right) \frac{y}{\log z}.
\end{align}
 Note that for $1< u \leq 2$ we have $\omega(u)=1/u.$ In the numerical computations we will use the following bounds for the Buchstab function (cf. \cite[Lemma 20]{jia})
\begin{align} \label{buchbound}
\omega(u) \, \begin{cases} = 0, &u < 1 \\
= 1/u, & 1 \leq u < 2 \\
= (1+\log(u-1))/u, &2 \leq u < 3 \\
\leq 0.5644, &  3 \leq u < 4 \\
\leq 0.5617, & u \geq 4 \\
\geq 0.5607, & u \geq 2.47.
\end{cases}
\end{align}

In the lemma below we assume that the range $\mathcal{U}\subset [x^{\eta},Px^{-\eta}]^{k}$ is sufficiently well-behaved, e.g. an intersection of sets of the type $\{ \boldsymbol{u}: u_i < u_j \}$ or $\{\boldsymbol{u}: V <  f(u_1, \dots,u_k) < W\}$ for some polynomial $f$ and some fixed $V,W.$

\begin{lemma} \label{bilemma} Let $\mathcal{U} \subset [x^{\eta},Px^{-\eta}]^{k} $ and $P=x^\alpha$. Then
\begin{align*}
\sum_{(q_1, \dots , q_k) \in \mathcal{U}} S(\B(P)_{q_1, \dots, q_k},q_k) = (1+o(1))X \int \psi_P(u)  \frac{du}{u} \alpha \int \omega (\alpha,\boldsymbol{\beta }) \frac{d\beta_1 \cdots d\beta_k}{\beta_1\cdots\beta_{k-1}\beta_k^2},
\end{align*}
where the integral is over the range $\{\boldsymbol{\beta}: \, (x^{\beta_1}, \dots, x^{\beta_k}) \in \mathcal{U}\}$, and 
\begin{align*}
 \omega(\alpha,\boldsymbol{\beta}):= \omega\bigg(\frac{\alpha-\beta_1-\cdots 
 -\beta_k}{\beta_k}\bigg).
\end{align*} 
\end{lemma}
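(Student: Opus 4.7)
The plan is to unfold the definition of $S(\B(P)_d, z)$, exploit the multiplicativity of $\rho$ to separate the $q_i$'s from $n$, evaluate the resulting inner sum over $n$ by partial summation against a Buchstab-type fundamental lemma for the $\rho$-weighted sifted sum, and finally convert the outer sum over primes into a log-scale Lebesgue integral via Mertens/PNT in arithmetic progressions modulo $4$.

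First I factor $\rho(q_1 \cdots q_k n) = \rho(q_1)\cdots\rho(q_k)\rho(n)$, which holds whenever the $q_i$ are distinct primes and $\gcd(n, q_1\cdots q_k)=1$; the tuples with repeated $q_i$'s, together with the $n$'s divisible by some $q_i$, contribute $\pprec x^{-\eta}$ of the main term because $\rho \pprec 1$ and each $q_i \geq x^\eta$. Hence, up to this error,
\begin{align*}
S(\B(P)_{q_1\cdots q_k}, q_k) = X \prod_{i=1}^k \frac{\rho(q_i)}{q_i} \cdot V(d, q_k), \qquad V(d, q_k) := \sum_{(n, P(q_k))=1} \frac{\rho(n)}{n} \psi_P(dn) \log(dn),
\end{align*}
where $d := q_1 \cdots q_k$. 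Next, I evaluate $V(d, q_k)$ by partial summation against the Buchstab-type estimate
\begin{align*}
\sum_{\substack{n \leq y \\ (n, P(q_k))=1}} \frac{\rho(n)}{n} = 1 + \int_1^{\log y / \log q_k} \omega(w)\, dw + O\!\left(\frac{1}{\log q_k}\right),
\end{align*}
which is the natural $\rho$-weighted one-dimensional analogue of (\ref{buchasymp}), justified by Buchstab's combinatorial identity combined with the Mertens estimate $\sum_{p \leq y} \rho(p)/p = \log\log y + O(1)$ (the problem is of sieve dimension $1$). After the change of variables $u = dn$ and using that $\log(dn) = \alpha\log x + O(1)$ on the support of $\psi_P$, this yields
\begin{align*}
V(d, q_k) = (1+o(1))\, \frac{\alpha}{\beta_k}\, \omega(\alpha, \boldsymbol{\beta}) \int \psi_P(u) \frac{du}{u},
\end{align*}
with $\beta_i := \log q_i / \log x$.

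Finally I transform the outer sum into an integral. The PNT in progressions modulo $4$ gives $\sum_{p \leq x^\beta} \rho(p)/p = \log\beta + \log\log x + O(1)$, so the local density of $\rho(p)/p$ in the log-variable $\beta$ is $1/\beta$, and for admissible $\mathcal{U}$ and any continuous $F$,
\begin{align*}
\sum_{(q_1,\ldots,q_k) \in \mathcal{U}} \prod_{i=1}^k \frac{\rho(q_i)}{q_i} F(q_1,\ldots,q_k) = (1+o(1)) \int_{\{\boldsymbol{\beta}: (x^{\beta_i}) \in \mathcal{U}\}} F(x^{\beta_1},\ldots,x^{\beta_k}) \frac{d\beta_1 \cdots d\beta_k}{\beta_1 \cdots \beta_k}.
\end{align*}
Applying this with $F(q_1,\ldots,q_k) = V(q_1\cdots q_k, q_k)$ and multiplying by $X$, the factor $1/\beta_k$ from $V$ combines with the Jacobian factor $1/\beta_k$ to produce the $1/\beta_k^2$ in the stated formula, while the $\alpha$ and the $u$-integral factor out cleanly.

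The main obstacle is establishing the Buchstab-type asymptotic for the $\rho$-weighted sifted sum uniformly in $y$ and $q_k$ with an error small enough to survive the subsequent outer sum over tuples (and over the $k \ll \log x$ possible Buchstab iterations implicit in how this lemma will be applied); however, this is a standard one-dimensional sieve computation (cf.\ \cite[Chapter 1]{harman}), and once it is in place the remaining manipulations are essentially Riemann-sum approximations controlled by the smoothness of $\mathcal{U}$.
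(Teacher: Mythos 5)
Your proof is correct and follows essentially the same path as the paper's: unfold the definition of $S(\B(P)_d,z)$, use the multiplicativity of $\rho$ together with equidistribution of primes modulo $4$ to reduce to a Buchstab-type asymptotic for the inner sum, then convert the outer sum over prime tuples into a log-scale integral. The only cosmetic difference is that the paper replaces $\rho$ by $1$ at the outset and then invokes the unweighted form (\ref{buchasymp}), whereas you carry the $\rho(q_i)$ and $\rho(n)$ factors along and use the $\rho$-weighted Mertens and Buchstab estimates directly; these bookkeeping choices land in the same place.
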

\begin{proof}
By definition the left-hand side in the lemma is equal to
\begin{align*}
\sum_{(q_1, \dots , q_k) \in \mathcal{U}} X \sum_m 1_{(m,P(q_k))=1} \frac{\rho(q_1\cdots q_k m)}{q_1 \cdots q_k m} \psi_P(q_1 \cdots q_k m) \log (q_1 \cdots q_k m).
\end{align*}
Note that the function $\rho(m)$ is multiplicative and $\rho(p) = 2\cdot 1_{p \equiv 1 \, (4)}$ for primes $p > 2.$ Hence, for $(m,P(x^\eta))=1$ we can replace $\rho(m)$ by 1 with negligible error by equidistribution of primes in arithmetic progressions. Therefore, by (\ref{buchasymp}) and by the Prime Number Theorem we have
\begin{align*}
& \sum_{(q_1, \dots , q_k) \in \mathcal{U}} S(\B(P)_{q_1, \dots, q_k},q_k) \\
&= \sum_{(q_1, \dots , q_k) \in \mathcal{U}} X \sum_m 1_{(m,P(q_k))=1} \frac{1}{q_1 \cdots q_k m} \psi_P(q_1 \cdots q_k m) \log (q_1 \cdots q_k m) \\
&= (1+o(1))X \int \psi_P(u) \log u \frac{du}{u} \sum_{(q_1, \dots , q_k) \in \mathcal{U}} \frac{1}{q_1\cdots q_k \log q_k} \omega \left( \frac{\log(P/(q_1\cdots q_k))}{\log q_k} \right) \\
&= (1+o(1))X \int \psi_P(u) \log u \frac{du}{u} \\
& \hspace{60pt}  \sum_{(n_1,\dots,n_k ) \in \mathcal{U}} \frac{1}{n_1\cdots n_k (\log n_1) \dots (\log n_{k-1} )\log^2 n_k} \omega \left( \frac{\log(P /(n_1\cdots n_k))}{\log n_k} \right) \\
&= (1+o(1))X \int \psi_P(u) \log u \frac{du}{u} \\
& \hspace{60pt} \int_{\mathcal{U}}  \omega \left( \frac{\log(P/(u_1\cdots u_k))}{\log u_k} \right)   \frac{du_1\cdots du_k}{u_1\cdots u_k (\log u_1) \dots (\log u_{k-1} )\log^2 u_k}\\
&=  (1+o(1))X \int \psi_P(u)  \frac{du}{u} \alpha \int \omega (\alpha,\boldsymbol{\beta }) \frac{d\beta_1 \cdots d\beta_k}{\beta_1\cdots\beta_{k-1}\beta_k^2}
\end{align*}
by the change of variables $u_j=x^{\beta_j}$ and by inserting $\log u = (1+o(1))\alpha \log x$.
\end{proof}
\begin{remark} We refer to the factor $\alpha \int \omega(\alpha,\boldsymbol{\beta}) \frac{ d \beta_1\cdots d\beta_k }{\beta_1\cdots\beta_{k-1}\beta_k^2}$ as the deficiency of the corresponding sum. 
\end{remark} 

For the linear sieve (cf. \cite[Chapter 11]{OdC}) we let $F(s),f(s)$ denote the continuous solution to the system of delay-differential equations
\begin{align*}
\begin{cases} (sF(s))' = f(s-1) \\
(sf(s))' = F(s-1)
\end{cases}
\end{align*}
with the initial condition
\begin{align*}
\begin{cases} sF(s) = 2e^{\gamma}, & \text{if} \, \, 1 \leq s \leq 3 \\
sf(s) = 0, & \text{if} \, \, s\leq 2.
\end{cases}.
\end{align*}
Here $\gamma$ is the Euler-Mascheroni constant.  We require the following
\begin{lemma}\textbf{\emph{(Linear sieve upper bound).}} \label{linearlemma} Let $D$ be as in Proposition \ref{typeiprop}. For $P=x^\alpha$ and for any $x^\eta < z < D$ we have
\begin{align*}
S(\A(P),z) \leq (1+o(1)) X \int \psi_P(u)\frac{du}{u} \frac{\alpha \log x}{e^\gamma \log z} F\bigg( \frac{\log D}{\log z}\bigg).
\end{align*}
\end{lemma}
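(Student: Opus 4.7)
The plan is to derive the stated inequality as a standard application of the Jurkat--Richert upper-bound linear sieve to the non-negative sequence $a_n = |\A_n|\psi_P(n)\log n$, with the level of distribution supplied by Proposition~\ref{typeiprop}.

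Let $(\lambda_d^+)_{d\leq D}$ denote the upper-bound linear-sieve weights, supported on squarefree $d\leq D$ with prime factors below $z$ and satisfying $|\lambda_d^+|\leq 1$ as well as $\sum_{d\mid n}\lambda_d^+ \geq 1_{(n,P(z))=1}$. Inserting this dissection into the definition of $S(\A(P),z)$ and invoking Proposition~\ref{typeiprop} with these divisor-bounded coefficients yields
\begin{align*}
S(\A(P),z) \leq X\sum_{d\leq D}\lambda_d^+\sum_{n\equiv 0\,(d)}\frac{\rho(n)}{n}\psi_P(n)\log n + O(x^{1-\eta}).
\end{align*}
The density $g(d) = \rho(d)/d$ satisfies the linear-sieve axioms with sifting dimension $\kappa=1$: since $\rho(p)=2\cdot 1_{p\equiv 1\,(4)}$ for odd $p$, Mertens's theorem for primes in arithmetic progressions gives
\begin{align*}
\sum_{w\leq p<z}\frac{\rho(p)}{p} = \log\frac{\log z}{\log w}+O\left(\frac{1}{\log w}\right).
\end{align*}
The Jurkat--Richert estimate (see, e.g., \cite[Chapter 11]{OdC}) therefore bounds the sieve main term above by
\begin{align*}
\bigl(F(\log D/\log z)+o(1)\bigr)\,V(z)\sum_n\frac{\rho(n)}{n}\psi_P(n)\log n,
\end{align*}
where $V(z):=\prod_{p<z}(1-\rho(p)/p)$.

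It remains to evaluate the two factors involving $\rho$. Writing $H_0>0$ for the residue at $s=1$ of $\zeta(s)L(s,\chi_4)$ (with the local factor at the prime $2$), one has $\sum_{n\leq T}\rho(n)/n = H_0\log T+O(1)$, so partial summation gives
\begin{align*}
\sum_n\frac{\rho(n)}{n}\psi_P(n)\log n = (1+o(1))\,H_0\,\alpha\log x\int \psi_P(u)\frac{du}{u}.
\end{align*}
The same Mertens input applied to $\log V(z)^{-1}$ produces the Wirsing-type identity $V(z)H_0\log z = e^{-\gamma}+o(1)$. Multiplying these two estimates, the constant $H_0$ cancels and leaves exactly the factor $\alpha\log x/(e^\gamma\log z)\cdot F(\log D/\log z)$ in the lemma.

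The main obstacle, such as it is, is purely bookkeeping: one must check that the constant $H_0$ arising from the evaluation of the ``size'' sum $\sum_n(\rho(n)/n)\psi_P(n)\log n$ is the very same $H_0$ produced by the Mertens analysis of $V(z)$, so that the ratio collapses cleanly into the Euler--Mascheroni factor $e^{-\gamma}$. Beyond this normalization issue, the argument is a textbook invocation of the linear sieve, with the arithmetic content entirely absorbed into Proposition~\ref{typeiprop}.
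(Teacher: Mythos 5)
Your proof takes the same route as the paper, which simply inserts the upper-bound linear-sieve weights, applies Proposition~\ref{typeiprop} to control the remainder, and then cites \cite[Theorem 11.12]{OdC} together with \cite[Section 8]{DI} for the Mertens-type evaluation of the main term; you carry out that evaluation explicitly, and you correctly flag the normalization as the one thing to verify. However, your bookkeeping of it has a small but genuine error. The sieve density of the model sequence $m\mapsto\rho(m)/m$ at multiples of a prime $p\equiv 1\pmod 4$ is $g^*(p)=2/(p+1)$, \emph{not} $\rho(p)/p=2/p$: since $\rho(p^k)=\rho(p)$ for all $k\geq 1$, the local Euler factor of $\sum_m\rho(m)/m$ at $p$ is $1+2/(p-1)=(p+1)/(p-1)$ while the restricted factor over $p\mid m$ is $2/(p-1)$, giving the ratio $2/(p+1)$ (and similarly $g^*(2)=1/3$, $g^*(p)=0$ for $p\equiv 3$). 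With your $V(z)=\prod_{p<z}(1-\rho(p)/p)$ the identity $V(z)H_0\log z\to e^{-\gamma}$ in fact fails, because the limiting Euler product $\prod_p(1-1/p)/(1-\rho(p)/p)$ is not equal to $H_0=L(1,\chi_4)/\zeta(2)$. With the correct density $g^*$ one checks directly that
\begin{align*}
\prod_p\frac{1-1/p}{1-g^*(p)}=\frac34\prod_{p\equiv 1\,(4)}\Big(1+\frac1p\Big)\prod_{p\equiv 3\,(4)}\Big(1-\frac1p\Big)=\frac{L(1,\chi_4)}{\zeta(2)}=H_0,
\end{align*}
which is exactly the cancellation you need. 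So the architecture of your argument matches the paper's; only the density function (and hence the Jurkat--Richert factor $V(z)$) needs to be corrected before the Mertens constant collapses to $e^{-\gamma}$.
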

\begin{proof}
Let $\lambda_d$ denote the sieve weights of the upper bound linear sieve  \cite[Chapter 11]{OdC}) with level of distribution $D$. Then
\begin{align*}
S(\A(P),z)  \leq \sum_{\substack{d\leq  D \\ d| P(z)}}\lambda_d \sum_{m \equiv 0 \,\, (d)}|\A_m| \psi_P(m) \log m = X \sum_{d \leq D} \lambda_d \sum_{\substack{d\leq  D \\ d| P(z)}} \frac{\rho(m)}{m} \psi_P(m) \log m + O (x^{1-\eta})
\end{align*}
by Proposition \ref{typeiprop}. The sum on the right-hand side can now be evaluated by using \cite[Theorem 11.12]{OdC} and the same argument as in \cite[Section 8]{DI}, which leads to the result.
\end{proof}

\subsection{Buchstab decompositions} \label{buchsec}
The general idea of Harman's sieve is to use Buchstab's identity to decompose the sum $S(\CC(P),2\sqrt{P})$  (in parallel for $\CC(P)=\A(P)$ and $\CC(P)=\B(P)$) into a sum of the form $\sum_k \epsilon_k S_k(\CC(P)),$ where $\epsilon_k \in \{-1,1\},$ and  $S_k(\CC(P)) \geq 0$ are sums over almost-primes.  Since we are interested in an upper bound, for $\CC(P)=\A(P)$ we can insert the trivial estimate $S_k(\A(P)) \geq 0$ for any $k$ such that the sign $\epsilon_k =-1;$ these sums are said to be discarded. For the remaining $k$ we will obtain an asymptotic formula by using Propositions \ref{typeiiprop} and \ref{funprop} (in some cases with $\epsilon_k=1$ we will use the linear sieve upper bound (Lemma \ref{linearlemma}) but let us ignore this for now). That is, if $\mathcal{K}$ is the set of indices that are discarded, then
\begin{align*}
S(\A(P), 2\sqrt{P})&= \sum_k \epsilon_k S_k(\A(P)) \leq \sum_{k \notin \mathcal{K}} \epsilon_k S_k(\A(P))  \\
&=(1+o(1)) \sum_{k \notin \mathcal{K}} \epsilon_k  S_k(\B(P)) =  (1+o(1))S(\B(P),2\sqrt{P}) +  \sum_{k \in \mathcal{K}}  S_k(\B(P)).
\end{align*} 
By the Prime Number Theorem we have
\begin{align*}
S(\B(P), 2 \sqrt{P}) = (1+o(1)) X \int \psi_P(u)\frac{du}{u}.
\end{align*}
The remaining sum $\sum_{k \in \mathcal{K}}  S_k(\B(P))$ we can estimate using Lemma \ref{bilemma}. Thus, we will obtain an upper bound of the form
\begin{align} \label{Gclaim}
S(\A(P),2\sqrt{P}) \leq (1+ G(\alpha))X \int \psi_P(u)\frac{du}{u}
\end{align}
for some non-negative function $G$ measuring the deficiency at range $P=x^\alpha.$ 

To relax the notations we will ignore factors of $x^{\eta}$ in the ranges of variables in this section, since their contribution to $G(\alpha)$ will be $O(\eta)$ which can be made arbitrarily small. 

We separate into five cases, $1\leq \alpha \leq 758/733,$ $758/733 \leq \alpha < 249/224$, $249/224 \leq \alpha <182/157  $,  $182/157 \leq \alpha < 153/128,$ and $\alpha > 153/128$.

\begin{remark} The range $\alpha < 249/224$ is where we can apply Proposition \ref{funprop}. For $\alpha < 182/157$ we will use Propostion \ref{funprop2}. For $182/157 \leq \alpha < 153/128$ we will use a combination of Proposition \ref{typeiiprop}(i) and the linear sieve upper bound. For $\alpha > 153/128$ we do not have any new information so that we just use the linear sieve similarly as in \cite{BD} and \cite{DI} to get an upper bound.
\end{remark}

\subsubsection{Case $1\leq \alpha < 758/733$} \label{alphasmallsection}
Let
\begin{align*}
\sigma := \frac{4-(3+2\theta)\alpha}{3-6\theta}-\eta
\end{align*}
(for $\alpha< 758/733$ part (ii) of Proposition \ref{typeiiprop} is stronger than (i)). Define $\xi$ by setting (recall Proposition \ref{funprop})
\begin{align*}
U=Dx^{1-\alpha-\eta}=x^{(1-2\theta\alpha)/(2-4\theta)-\alpha+1-2\eta}=:x^{\xi},
\end{align*}
 Let $\CC \in \{\A,\B\}$. By Buchstab's identity we have
\begin{align*}
S(\CC(P), 2 \sqrt{P})= S(\CC(P), x^\sigma) -  \sum_{x^\sigma < q \leq 2\sqrt{P}} S(\CC(P)_q, q).
\end{align*}
By Proposition \ref{funprop} we have an asymptotic formula for the first term. In the second sum we note that the implicit variable in $S(\CC(P)_q, q)$ (cf. $n$ in (\ref{asieve}) and (\ref{bsieve})) is of size $ x^{\alpha}/q$, so that for $q \gg x^{\alpha-2\sigma}$ the implicit variable runs over primes of size  $<x^{2\sigma}.$ Hence
\begin{align*}
\sum_{x^{\alpha-2\sigma} \ll q \leq U} S(\CC(P)_q, q) = \sum_{x^{\alpha-2\sigma} \ll q \leq U} S(\CC(P)_q, x^\sigma),
\end{align*}
so that we have an asymptotic formula by Proposition \ref{funprop} in this range. We note that this range is non-trivial precisely if
\begin{align*}
\alpha < 758/733 = 1.034\dots.
\end{align*}
The remaining part we just discard, which by Lemma  \ref{bilemma} gives us a deficiency
\begin{align} \label{g1alpha}
\alpha \int_{\sigma}^{\alpha-2\sigma} \omega(\alpha/\beta-1) \frac{d \beta}{\beta^2} + \alpha \int_{\xi}^{\alpha/2} \omega(\alpha/\beta-1) \frac{d \beta}{\beta^2}.
\end{align}

\subsubsection{Case $758/733 \leq \alpha < 249/224$}
Let
\begin{align*}
\sigma := \max \bigg\{\frac{2-2\theta-\alpha}{3}-\eta, \frac{4-(3+2\theta)\alpha}{3-6\theta}-\eta\bigg \}.
\end{align*}
By Buchstab's identity we have
\begin{align*}
S(\CC(P), 2 \sqrt{P})= S(\CC(P), x^\sigma) -  \sum_{x^\sigma < q \leq 2\sqrt{P}} S(\CC(P)_q, q).
\end{align*}
By Proposition \ref{funprop} we have an asymptotic formula for the first term.  The second sum we just discard, which by Lemma  \ref{bilemma} gives us a deficiency
\begin{align*}
\alpha \int_{\sigma}^{\alpha/2} \omega(\alpha/\beta-1) \frac{d \beta}{\beta^2}.
\end{align*}
Summing over the dyadic ranges $x < P=2^j x < x^{249/224}$ we obtain
\begin{align*}
\sum_{\substack{x \leq P \leq x^{249/224} \\ P= 2^j x}} S(x,P) \leq (25/224 + G_1+G_2+o(1)) X \log x ,
\end{align*}
where by (\ref{g1alpha})
\begin{align*}
G_1 := \int_1^{758/733} \alpha\bigg( \int_{\sigma}^{\alpha-2\sigma} \omega(\alpha/\beta-1) \frac{d \beta}{\beta^2} + \int_{\xi}^{\alpha/2} \omega(\alpha/\beta-1) \frac{d \beta}{\beta^2} \bigg) d\alpha < 0.01745
\end{align*}
and
\begin{align*}
G_2 := \int_{758/733}^{249/224} \alpha \int_{\sigma}^{\alpha/2} \omega(\alpha/\beta-1) \frac{d \beta}{\beta^2}  d\alpha < 0.11478.
\end{align*}

\subsubsection{Case $249/224 \leq \alpha < 182/157$}
From here on we let $\sigma := (2-2\theta-\alpha)/3$ (for $\alpha \geq 249/224$ part (i) of Proposition \ref{typeiiprop} is stronger than (ii)). Recall that in Proposition \ref{funprop2}
\begin{align*}
\gamma := \frac{2-2\theta-\alpha}{3} - \alpha+1 - 2\eta.
\end{align*}
By applying Buchstab's identity we get
\begin{align*}
S(\A(P), 2 \sqrt{P})&= S(\A(P), x^{\gamma}) -  \sum_{x^{\gamma} < q \leq 2\sqrt{P}} S(\A(P)_q, q).
\end{align*}
For the first term we have an asymptotic formula by Proposition \ref{funprop2}. In the second sum we get an asymptotic formula by Proposition \ref{typeiiprop}(i) in the part $x^{\alpha-1}< q <x^\sigma$. We discard the part with $x^\sigma < q < x^{\alpha/2}$, which gives us a deficiency
\begin{align*}
\alpha \int_{\sigma}^{\alpha/2} \omega(\alpha/\beta-1) \frac{d \beta}{\beta^2}.
\end{align*}
For the remaining part $x^\gamma < q \leq x^{\alpha-1}$ we apply Buchstab's identity twice to get
\begin{align*}
-\sum_{x^{\gamma} < q\leq x^{\alpha-1}} S(\A(P)_q, q) = -\sum_{x^{\gamma} < q\leq x^{\alpha-1}}& S(\A(P)_q, x^\gamma)  + \sum_{x^{\gamma} < q_2<q_1\leq x^{\alpha-1}} S(\A(P)_{q_1q_2}, x^\gamma) \\
&-  \sum_{x^{\gamma} < q_3<q_2<q_1\leq x^{\alpha-1}} S(\A(P)_{q_1q_2q_3}, q_3).
\end{align*}
Since $\alpha < 182/157$, we have $x^{2(\alpha-1)} < U$ so that for the first two sums we have an asymptotic formula by Proposition \ref{funprop2}. In the last sum we use Proposition \ref{typeiiprop}(i) to get an asymptotic formula whenever any combination of $q_1,q_2,q_3$ is in the Type II range $[x^{\alpha-1},x^\sigma]$ and we discard the rest. Thus,
 \begin{align*}
\sum_{\substack{x^{249/224} \leq P \leq x^{182/157} \\ P= 2^j x}} S(x,P) \leq \bigg(\frac{182}{157}-\frac{249}{224} + G_3+G_4+o(1)\bigg) X \log x ,
\end{align*}
where
\begin{align*}
G_3 := \int_{249/224}^{182/157} \alpha \int_{\sigma}^{\alpha/2} \omega(\alpha/\beta-1) \frac{d \beta}{\beta^2}  d\alpha < 0.093754.
\end{align*}
and
\begin{align*}
G_4:= \int f_4(\alpha, \bm{\beta})\alpha\omega \bigg( \frac{\alpha-\beta_1-\beta_2-\beta_3}{\beta_3}\bigg) \frac{d \beta_1 d\beta_2 d\beta_3}{\beta_1 \beta_2 \beta_3^2} d\alpha < 0.0057
\end{align*}
with $f_4$ the characteristic function of the four dimensional set
\begin{align*}
\bigg\{\frac{249}{224}< \alpha & <\frac{182}{157}, \, \gamma < \beta_3< \beta_2< \beta_1 < \alpha-1 \\
& \beta_1+\beta_2, \, \beta_1+\beta_3, \, \beta_2+\beta_3, \, \beta_1+\beta_2+\beta_3 \notin [\alpha-1, \sigma] \bigg \}
\end{align*}

\subsubsection{Case $182/157 \leq \alpha < 153/128$}
By applying Buchstab's identity we get
\begin{align*}
S(\A(P), 2 \sqrt{P})&= S(\A(P), x^{\alpha-1}) -  \sum_{x^{\alpha-1} < q \leq 2\sqrt{P}} S(\A(P)_q, q) \\
&\leq  S(\A(P), x^{\alpha-1}) -  \sum_{x^{\alpha-1} <q \leq  x^\sigma} S(\A(P)_q, q).
\end{align*}
For the first term we use the linear sieve upper bound (Lemma \ref{linearlemma}), while for the second term we have an asymptotic formula by Proposition \ref{typeiiprop}. Hence, by Lemmata \ref{bilemma} and \ref{linearlemma} we get an upper bound
\begin{align*}
S(\A(P), 2 \sqrt{P}) \leq  (G_5(\alpha)-G_6(\alpha)+o(1)) X \int \psi_P(u)\frac{du}{u},
\end{align*}
so that
\begin{align*}
\sum_{\substack{x^{182/157} \leq P \leq x^{153/128} \\ P= 2^j x}} S(x,P) \leq ( G_5-G_6+o(1)) X \log x ,
\end{align*}
where
\begin{align*}
G_5 := e^{-\gamma}\int_{182/157}^{153/128}\frac{\alpha}{\alpha-1} F\bigg( \frac{1-2\theta\alpha}{(2-4\theta)(\alpha-1)}\bigg) d\alpha =  4(1-2\theta)\int_{182/157}^{153/128}\frac{\alpha}{1-2\theta\alpha}  d\alpha  < 0.17877
\end{align*}
and
\begin{align*}
G_6 := \int_{182/157}^{153/128} \alpha \int_{\alpha-1}^{\sigma} \omega(\alpha/\beta-1) \frac{d\beta}{\beta^2} d\alpha > 0.016329.
\end{align*}

\begin{remark} Here also we could apply Buchstab's identity multiple times to generate more Type II sums, similarly as we did for $\alpha < 182/157$. However, for $\alpha > 182/157$ the width of our Type II information is $\gamma <0.048$ so that the gain from this would be fairly small (certainly less than $G_6$) so we ignore this to simplify the argument.
\end{remark}

\subsubsection{Case $\alpha > 153/128$}
In the range $P \geq x^{153/128}$ we do not have any new information, so that just using the linear sieve upper bound (Lemma \ref{linearlemma}) we obtain
\begin{align} \label{linearremainder}
\sum_{\substack{x^{153/128} \leq P \leq x^\varpi \\ P= 2^j x}} S(x,P) \leq \bigg(4(1-2\theta) \int_{153/128}^\varpi \frac{\alpha}{1-2\theta\alpha} d \alpha+o(1)\bigg) X \log x  .
\end{align}

\subsection{Conclusion of the proof of Theorem \ref{maint}}
Summing over the estimates we get
\begin{align*}
\sum_{\substack{x\leq P \leq x^{153/128}  \\ P= 2^j x}} S(x,P) \leq  (25/157+G+o(1)) X \log x,
\end{align*}
where
\begin{align*}
25/157 + G = 25/157+G_1+G_2+G_3+G_4+G_5-G_6 < 0.553361
\end{align*}
Combining this with (\ref{linearremainder}), we have
\begin{align*}
 \frac{1}{X \log x}\sum_{\substack{x\leq P \leq x^{1.279}  \\ P= 2^j x}} S(x,P) < 0.553361 +4(1-2\theta) \int_{153/128}^{1.279} \frac{\alpha}{1-2\theta\alpha} d \alpha = 0.997\dots < 1,
\end{align*}
which proves Theorem \ref{maint} since otherwise we reach a contradiction with the asymptotic (\ref{cheby}). \qed

\begin{remark} In comparison, just using the linear sieve upper bound gives
\begin{align*}
\sum_{\substack{x \leq P \leq x^{153/128} \\ P= 2^j x}} S(x,P) \leq \bigg(4(1-2\theta) \int_{1}^{153/128} \frac{\alpha}{1-2\theta\alpha} d \alpha+o(1)\bigg) X \log x < 0.8213 \cdot X \log x.
\end{align*}
\end{remark}

\begin{remark} The method in \cite{BD} and \cite{DI} gives an asymptotic formula for $S(x,P)$ for $P \leq x$, but for $P=x^{1+\epsilon}$ the upper bound is off by a factor of $4+O(\epsilon)$. In contrast, we get the correct upper bound for $P=x^{1+\epsilon}$. As $P=x^\alpha$ varies from $x$ to $x^{153/128}$ our method can be enhanced to give an upper bound which continuously increases from an asymptotic formula to the linear sieve upper bound (this would require a more careful handling of the part $182/157 \leq \alpha < 153/128$). This is in accordance with the general principle of Harman's sieve method that our sieve bounds should depend continuously on the quality of the arithmetic information.
\end{remark}

The Python 3.7 codes for computations of the Buchstab integrals are available at: 
\begin{tabular}{ c c c }
&\hspace{50pt} $G_1$ & \quad \quad \url{http://codepad.org/e2RiL3TM} \\
&\hspace{50pt} $G_2$ &\quad \quad \url{http://codepad.org/i2BOT07g}  \\
&\hspace{50pt} $G_3$ & \quad \quad\url{http://codepad.org/vMlImNKm}  \\
&\hspace{50pt} $G_4$ &\quad \quad \url{http://codepad.org/DOxewic3}  \\
&\hspace{50pt} $G_6$ &\quad \quad \url{http://codepad.org/IKZNttfN}  \\
\end{tabular}

\subsection{Proof of Theorem \ref{selbergt}}
The sieve follows the same recipe as the proof of Theorem \ref{maint}. Assuming Selberg's conjecture we may set $\theta=0$, so that $D=x^{1/2}$, $U=x^{3/2-\alpha}=x^\xi$, and $\sigma = (2-\alpha)/3$. The reader will verify that now the ranges corresponding to  the five ranges in the proof of Theorem 1 are $1\leq \alpha < 17/16$, $17/16 \leq \alpha < 8/7$, $8/7 \leq \alpha < 7/6$, $7/6 < \alpha < 5/4$ and $\alpha \geq 5/4.$ By a similar application of Buchstab's identities we get
\begin{align*}
\sum_{\substack{x\leq P \leq x^{5/4}  \\ P= 2^j x}} S(x,P) \leq  (1/6+F+o(1)) X \log x,
\end{align*}
where 
\begin{align*}
1/6 + F = 1/6+ F_1+F_2+F_3+F_4+F_5-F_6 < 0.679914 
\end{align*}
with 
\begin{align*}
F_1 &:= \int_1^{17/16} \alpha\bigg( \int_{\sigma}^{\alpha-2 \sigma} \omega(\alpha/\beta-1) \frac{d \beta}{\beta^2} + \int_{\xi}^{\alpha/2} \omega(\alpha/\beta-1) \frac{d \beta}{\beta^2} \bigg) d\alpha < 0.0287 \\
F_2 &:=  \int_{17/16}^{8/7} \alpha \int_{\sigma}^{\alpha/2} \omega(\alpha/\beta-1) \frac{d \beta}{\beta^2}  d\alpha < 0.08622 \\
F_3 &:=  \int_{8/7}^{7/6} \alpha \int_{\sigma}^{\alpha/2} \omega(\alpha/\beta-1) \frac{d \beta}{\beta^2}  d\alpha < 0.03107 \\
F_4 &:= \int f_4(\alpha, \bm{\beta})\alpha\omega \bigg( \frac{\alpha-\beta_1-\beta_2-\beta_3}{\beta_3}\bigg) \frac{d \beta_1 d\beta_2 d\beta_3}{\beta_1 \beta_2 \beta_3^2} d\alpha < 0.00011 \\
F_5 &:=  4 \int_{7/6}^{5/4}\alpha d\alpha  =29/72 \\
F_6 &:= \int_{7/6}^{5/4} \alpha \int_{\alpha-1}^{\sigma} \omega(\alpha/\beta-1) \frac{d\beta}{\beta^2} d\alpha > 0.035631
\end{align*}
with $f_4$ the characteristic function of the four dimensional set
\begin{align*}
\bigg\{8/7< \alpha & <7/6, \, \gamma < \beta_3< \beta_2< \beta_1 < \alpha-1 \\
& \beta_1+\beta_2, \, \beta_1+\beta_3, \, \beta_2+\beta_3, \, \beta_1+\beta_2+\beta_3 \notin [\alpha-1, \sigma] \bigg \}.
\end{align*}
We also have by the linear sieve (Lemma \ref{linearlemma})
\begin{align*}
\sum_{\substack{ x^{5/4} \leq P \leq x^{\varpi} \\ P= 2^j x}} S(x,P) \leq  \bigg(4\int_{5/4}^\varpi \alpha d \alpha +o(1)\bigg) X \log x.
\end{align*}
Combining the two estimates we have
\begin{align*}
 \frac{1}{X \log x}\sum_{\substack{x\leq P \leq x^{1.312}  \\ P= 2^j x}} S(x,P) < 0.679914  + 4\int_{5/4}^{1.312} \alpha d \alpha  = 0.997\dots < 1,
\end{align*}
which implies Theorem \ref{selbergt}. \qed

\section{Type II information} \label{typeiisection}
In this section we give a proof of Proposition \ref{typeiiprop}. Let us first give a non-rigorous sketch of the argument.

\subsection{Sketch of the argument}
Similarly as in \cite{iwaniec} and \cite{lemke}, in \cite[Th\'eor\`eme 5.2]{BD} de la Bret\`eche and Drappeau obtain asymptotic formulas for Type II sums by using the dispersion method of Linnik (cf. \cite[Section 8.3.3]{BD}). 

Our argument is more direct. We begin by applying the Poisson summation formula to evaluate $|\A_{mn}|$. For simplicity, let us assume that $(m,n)=1$ in the Type II sum in Proposition \ref{typeiiprop}. Then by the Poisson summation formula (Lemma \ref{poisson}) we can reduce the claim to showing that for $H=x^\epsilon P/x$ and for any bounded coefficients $c_h$ we have
\begin{align*}
\frac{1}{H} \sum_{1 \leq |h| \leq H} c_h \sum_{\substack{m \sim M \\ n \sim N \\ (m,n)=1}} a_m b_n \sum_{\substack{\nu \,\, (mn) \\ \nu^2+1 \equiv 0 \,\, (mn)}} e_{mn}(-h\nu) \, \ll x^{1-\eta}.
\end{align*}
\begin{remark} Note that the length of the exponential sum is $MN=P,$ while we need a bound that is a bit less than $x$. Thus, we need to save a power of $x$, the more the bigger $P$ is. Since we need to apply the Cauchy-Schwarz inequality in the proof, all savings are essentially halved. For this reason we are unable to get an estimate for large $P$.
\end{remark}
\begin{remark} For a fixed $h$ this sum is the same bilinear sum as in the work of Duke, Friedlander and Iwaniec \cite[Proposition 2]{DFI}. Note that in their work only a small saving over the trivial bound is required, that is a bound $\ll P^{1-\eta}$. In this case their method gives unconditionally the same range as one gets assuming Selberg's conjecture (ie. $x^{\eta} \ll N \ll x^{1/3-\eta}$). Our argument has a similar flavour to their proof, but in contrast we also make use of the average over the frequencies $h$.
 \end{remark}
When we apply Cauchy-Schwarz we would like to simplify matters by keeping the sum over $\nu^2+1\equiv 0 \, (mn)$ `outside' while keeping the sum over $n$ `inside'. To facilitate this, recall that $b_n$ is supported on square-free integers. Hence, if we denote
\begin{align*}
Q:=Q(m):= \prod_{\substack{2 \leq p\leq 2N \\ p \equiv 1,2 \, \, (4) \\ p\, \nmid\, m}} p,
 \end{align*}
then by the Chinese Remainder theorem we have (for $(m,n)=1$)
\begin{align*}
  \sum_{\substack{\nu^2+1 \equiv 0 \,\, (mn)}} e_{mn}(-h\nu) =\frac{\rho(n)}{\rho(Q)} \sum_{\substack{\nu^2+1 \equiv 0 \,\, (mQ)}} e_{mn}(-h\nu).
\end{align*}

Let $\psi_M(m)$ denote a $C^\infty$-smooth majorant of $1_{m \sim M}.$ By the Cauchy-Schwarz inequality and by expanding the square afterwards we obtain
\begin{align*}
\sum_{m \sim M} &a_m \frac{1}{\rho(Q)} \sum_{\substack{\nu^2+1 \equiv 0 \,\, (m Q)}} \frac{1}{H} \sum_{1 \leq |h| \leq H} c_h \sum_{\substack{ n \sim N \\ (m,n)=1}}  b_n \rho(n) e_{mn}(-h\nu)  \\
& \pprec M^{1/2} \bigg( \sum_{m} \psi_M(m)\frac{1}{H^2} \sum_{1\leq |h_1|,|h_2| \leq H} c_{h_1} \overline{c_{h_2}}    \sum_{\substack{n_1,n_2 \sim N \\ (m,n_1n_2)=1}} b_{n_1}   \overline{b_{n_2}} \\
& \hspace{100pt}  \frac{\rho(n_1)\rho(n_2)}{\rho(Q)} \sum_{\substack{\nu^2+1 \equiv 0 \,\, (mQ)}} e_{mn_1}(-h_1\nu) e_{mn_2}(h_2\nu)\bigg)^{1/2} \\ 
&\pprec M^{1/2} \bigg( \frac{1}{H^2} \sum_{1\leq |h_1|,|h_2| \leq H} c_{h_1} \overline{c_{h_2}}   \sum_{n_0\ll N} \rho(n_0) \sum_{\substack{n_1,n_2 \sim N/n_0 \\ (n_1,n_2)=1}} b_{n_0n_1}  \overline{b_{n_0 n_2}} \\
& \hspace{70pt} \sum_{(m,n_0n_1n_2)=1} \psi_M(m) \sum_{\substack{\nu^2+1 \equiv 0 \,\, (m n_0n_1n_2 )}}  e_{mn_0n_1n_2}((h_2n_1-h_1n_2)\nu)\bigg)^{1/2}
\end{align*}
by denoting $n_0=(n_1,n_2)$ and by  using the Chinese Remainder Theorem to collapse the sum over $\nu^2+1 \equiv 0 \,\, (mQ)$ back to a sum over $\nu^2+1\equiv 0 \,\, (mn_0n_1n_2)$.

In the diagonal part $h_1n_2-h_2n_1=0$ we use a trivial estimate to get aboud
\begin{align*}
\pprec M^{1/2} \bigg( \frac{1}{H^2} HNM  \bigg)^{1/2} \ll MN^{1/2}H^{-1/2} \ll x^{1/2}P^{1/2} N^{-1/2} < x^{1-\eta},
\end{align*}
since $H>P/x$ and $N\gg x^{\alpha-1+\eta}$.  

For the off-diagonal $h_1n_2-h_2n_1\neq 0$ we can introduce Kloosterman sums by a similar argument as in  \cite[Section 5]{DI} to get a sum of the type
\begin{align*}
\sum_{r} \sum_{\substack{m \sim \bm{M}\\ n \sim \bm{N}}}A_{m,r} B_{n,r} \sum_{(c,r)=1} g(m,n,c,r) S(m \overline{r},\pm n;c)
\end{align*}
where $g(m,n,c,r)$ is a $C^{\infty}$-smooth function. Here $r$ corresponds to $n_0n_1n_2$, $n$ corresponds to $h_1n_2-h_2n_1$, and $m$ is the frequency parameter that arises from completing an incomplete Kloosterman sum by using Lemma \ref{complete}. Unfortunately both of the coefficients $A_{m,r}$ and $B_{n,r}$ depend on $r$, so that we are unable to make use of the average over the `level variable' $r$ (as in \cite[Theorem 10]{DI2}).  By combining the bound $\theta \leq 7/64$ of Kim and Sarnak \cite[Appendix 2]{KS} with the estimate of Deshouillers and Iwaniec \cite[Theorem 9]{DI2} we can bound
\begin{align*}
 \sum_{\substack{m \sim \bm{M}\\ n \sim \bm{N}}}A_{m,r} B_{n,r} \sum_{(c,r)=1} g(m,n,c,r) S(m \overline{r},\pm n;c)
\end{align*}
for each $r$ individually, which gives a sufficient bound as long as $N \ll x^{(2-2\theta-\alpha)/3}$ for $\theta=7/64$.

\subsection{Sizes of various quantities in the proof}
In the proof of Proposition \ref{typeiiprop}(i) below there will appear numerous quantities. Here we have collected their sizes and relations to one another:
\begin{align*}
&P= x^{\alpha}, \quad \quad MN=P, \quad \quad x^{\alpha-1+\eta} \ll N \ll  x^{(2-2\theta-\alpha)/3-\eta}=x^{(57-32\alpha)/96 - \eta}, \\
&H= x^\epsilon P/x, \quad \quad k \ll M, \quad \quad 1 \ll R, S \ll \frac{P^{1/2}N^{1/2}}{k^{1/2}n_0^{1/2}},  \\
&T = x^\epsilon \frac{S \delta N^2 }{R n_0}, \quad \quad H_1, H_2 \ll H, \quad \quad \varrho = \delta k^2 n_0 n_1n_2 \asymp \delta N^2/n_0, \\
&\bm{M} \ll T, \quad \quad \bm{N} \ll \frac{HN}{kn_0}, \quad \quad \text{and} \quad \quad C \ll S.
\end{align*}

\subsection{Preliminaries}
We have collected here some basic estimates which will be needed in the proof. 
\begin{lemma}  \label{gcdsum} Let $L \geq 1.$ For any integer $q \neq 0$ we have
\begin{align*}
\sum_{1 \leq \ell \leq L} (\ell, q) \leq \tau(q) L.
\end{align*}
\end{lemma}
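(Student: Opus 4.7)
The plan is to prove the bound by grouping the summands according to the value of $(\ell,q)$. Every value $(\ell,q)$ can take is a divisor $d$ of $q$, and if $(\ell,q) = d$ then in particular $d \mid \ell$, so there are at most $\lfloor L/d \rfloor \leq L/d$ admissible $\ell \leq L$. Hence
\begin{align*}
\sum_{1 \leq \ell \leq L} (\ell,q) \;=\; \sum_{d \mid q} d \cdot \#\{\ell \leq L : (\ell,q) = d\} \;\leq\; \sum_{d \mid q} d \cdot \frac{L}{d} \;=\; L \tau(q),
\end{align*}
which is exactly the claim.

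An essentially equivalent route, which I would present as an alternative if one prefers to keep the argument multiplicative in flavour, is to use the identity $n = \sum_{d \mid n} \varphi(d)$ with $n = (\ell,q)$, giving $(\ell,q) = \sum_{d \mid q,\, d \mid \ell} \varphi(d)$, and then interchanging the order of summation:
\begin{align*}
\sum_{1 \leq \ell \leq L} (\ell,q) \;=\; \sum_{d \mid q} \varphi(d) \left\lfloor \frac{L}{d} \right\rfloor \;\leq\; L \sum_{d \mid q} \frac{\varphi(d)}{d} \;\leq\; L \tau(q),
\end{align*}
where the last step uses $\varphi(d)/d \leq 1$. There is no real obstacle here; the only thing to be careful about is that the result is stated for \emph{any} nonzero integer $q$, so one should note that only $|q|$ matters (both $\tau$ and the gcd are insensitive to sign), after which the argument above applies verbatim.
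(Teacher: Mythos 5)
Your first argument is correct and is essentially the paper's proof: you group the $\ell$ by the value $d=(\ell,q)$, use that such $\ell$ are multiples of $d$, and sum over $d\mid q$, which is exactly the decomposition the paper intends (the paper's displayed inequality is missing a factor of $d$ inside $\sum_{d\mid q}$, but the underlying argument is the same). Your $\varphi$-based alternative is a small variant of the same idea and is also fine, even giving the slightly sharper constant $\sum_{d\mid q}\varphi(d)/d$ in place of $\tau(q)$.
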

\begin{proof}
We have
\begin{align*}
\sum_{1 \leq \ell \leq L} (\ell, q) \leq \sum_{d| q} \sum_{1\leq \ell \leq L} 1_{d| \ell} \leq \tau(q)L.
\end{align*}
\end{proof}

The following lemma is easily proved from \cite[Lemma 1]{DI} by using integration by parts multiple times.
\begin{lemma} \emph{\textbf{(Truncated Poisson summation formula).}}  \label{poisson}
Let $\psi$ be a fixed $C^\infty$-smooth compactly supported function and let $x \gg 1$. Let $q \geq 1$ be an integer. Then for any $A, \epsilon > 0$
\begin{align*}
\sum_{n \equiv a \, (q)} \psi\bigg(\frac{n}{x}\bigg) = \frac{1}{q} \int \psi \bigg(\frac{\xi}{x}\bigg) d\xi + \frac{x}{q} \sum_{1 \leq |h| \leq x^\epsilon q/x} \widehat{\psi} \bigg( \frac{h x}{q}\bigg) e \bigg(-\frac{ah}{q} \bigg) + O_{A,\epsilon,\psi}(x^{-A}),
\end{align*}
where $\hat{f}(h):= \int f(\xi)e(h\xi) d\xi$ is the Fourier transform.
\end{lemma}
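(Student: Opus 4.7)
The plan is to reduce the left-hand side to the ordinary Poisson summation formula and then control the high-frequency tail by rapid decay of $\widehat{\psi}$. Writing $n = a+qm$, the sum becomes $\sum_{m \in \Z} g(m)$ with $g(m) := \psi((a+qm)/x)$, and classical Poisson summation gives
\begin{align*}
\sum_{m \in \Z} g(m) = \sum_{h \in \Z} \int_\R g(\xi) e(-h\xi) \, d\xi.
\end{align*}
The linear change of variable $u = (a+q\xi)/x$ (so $d\xi = (x/q)\, du$) transforms the Fourier integral, after a brief computation, into $(x/q) e(ah/q) \widehat{\psi}(-hx/q)$, using the paper's convention $\widehat{\psi}(y) = \int \psi(\xi) e(y\xi)\, d\xi$. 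Relabeling $h \mapsto -h$ then yields the exact identity
\begin{align*}
\sum_{n \equiv a \, (q)} \psi(n/x) = \frac{x}{q} \sum_{h \in \Z} \widehat{\psi}(hx/q) e(-ah/q),
\end{align*}
whose $h=0$ term is $(x/q)\widehat{\psi}(0) = (1/q)\int \psi(\xi/x)\, d\xi$, exactly the main term.

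Next I would truncate at $H_0 := x^\epsilon q / x$ and control the tail $|h| > H_0$ using the rapid decay of $\widehat{\psi}$. Since $\psi \in C^\infty$ with compact support, integrating by parts $B$ times in $\int \psi(\xi) e(y\xi)\, d\xi$ and bounding the resulting derivatives yields $|\widehat{\psi}(y)| \ll_{B,\psi} (1+|y|)^{-B}$ for every $B \geq 0$. Summing the resulting geometric tail gives
\begin{align*}
\frac{x}{q} \sum_{|h| > H_0} |\widehat{\psi}(hx/q)| \ll_B \frac{x}{q}\left(\frac{x}{q}\right)^{-B} H_0^{1-B} = x^{\epsilon(1-B)},
\end{align*}
which is $\ll x^{-A}$ once $B = B(A,\epsilon)$ is chosen large enough. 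In the remaining regime $H_0 < 1$ (i.e.\ $q < x^{1-\epsilon}$) the truncated sum in the statement is empty, and applying the same rapid-decay estimate to the full range $h \neq 0$ still produces an error of $O(x^{-A})$, since $|hx/q| \geq x/q > x^\epsilon$ for every nonzero $h$.

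The whole argument is therefore routine: Poisson summation plus the standard smooth-cutoff decay of $\widehat{\psi}$, with implicit constants depending only on finitely many derivative bounds of $\psi$ and uniform in $a$ and $q$. There is no genuine obstacle here; the only care needed is to track that the truncation threshold $H_0$ matches $x^\epsilon q / x$ after the change of variables, which is precisely what integrating by parts a sufficient number of times — the remark the paper makes in pointing to \cite[Lemma 1]{DI} — delivers.
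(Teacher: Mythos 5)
Your proof is correct and is essentially the argument the paper has in mind: the paper simply cites \cite[Lemma 1]{DI} (a Poisson-summation identity) and says to integrate by parts repeatedly to truncate, which is exactly what you carry out — classical Poisson summation applied to $g(m)=\psi((a+qm)/x)$, a linear change of variables matching the paper's Fourier convention, and the rapid-decay bound $|\widehat{\psi}(y)|\ll_B (1+|y|)^{-B}$ from integration by parts to kill the tail $|h|>x^\epsilon q/x$ with error $O(x^{\epsilon(1-B)})$, uniform in $a$ and $q$.
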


Applying the above lemma we immediately infer
\begin{lemma} \emph{\textbf{(Completion of sums).}}  \label{complete}
Let $\psi$ be a fixed $C^\infty$-smooth compactly supported function and let $x \gg 1$. Let $q \geq 1$ be an integer. Suppose that $F:\N \to \C$ is a $q$-periodic function. Then for any $A, \epsilon > 0$
\begin{align*}
\sum_{n} \psi\bigg(\frac{n}{x}\bigg) F(n) =  \frac{x}{q} \sum_{0 \leq |h| \leq x^\epsilon q/x} \widehat{\psi} \bigg( \frac{h x}{q}\bigg) \sum_{a \in \Z/q\Z } F(a)e_q (-ah) + O_{A,\epsilon,\psi}\bigg(x^{-A} \sum_{a \in \Z/q\Z} |F(a)| \bigg).
\end{align*}
\end{lemma}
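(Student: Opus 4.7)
The plan is to derive this lemma as a direct corollary of the truncated Poisson summation formula (Lemma \ref{poisson}), following exactly the hint in the paper that it ``immediately follows.''

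First I would partition the sum over $n$ according to residue classes modulo $q$, using the $q$-periodicity of $F$ to pull the values $F(a)$ outside:
\begin{align*}
\sum_n \psi\!\left(\frac{n}{x}\right) F(n) = \sum_{a \in \Z/q\Z} F(a) \sum_{\substack{n \equiv a \,(q)}} \psi\!\left(\frac{n}{x}\right).
\end{align*}
Next I would invoke Lemma \ref{poisson} on each inner sum, which yields, for each residue class $a$,
\begin{align*}
\sum_{n \equiv a \,(q)} \psi\!\left(\frac{n}{x}\right) = \frac{1}{q}\int \psi\!\left(\frac{\xi}{x}\right) d\xi + \frac{x}{q} \sum_{1 \leq |h| \leq x^\epsilon q/x} \widehat{\psi}\!\left(\frac{hx}{q}\right) e_q(-ah) + O_{A,\epsilon,\psi}(x^{-A}).
\end{align*}

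Then I would absorb the integral term into the frequency sum by noting that $\int \psi(\xi/x)\, d\xi = x\,\widehat{\psi}(0)$, which is exactly the $h=0$ contribution; this allows the truncated sum to be written over $0 \leq |h| \leq x^\epsilon q/x$. Substituting back, the main term becomes
\begin{align*}
\frac{x}{q} \sum_{0 \leq |h| \leq x^\epsilon q/x} \widehat{\psi}\!\left(\frac{hx}{q}\right) \sum_{a \in \Z/q\Z} F(a)\, e_q(-ah),
\end{align*}
as claimed. The error term comes from summing the $O_{A,\epsilon,\psi}(x^{-A})$ over residue classes $a$, each weighted by $|F(a)|$, producing the bound $O_{A,\epsilon,\psi}\!\left(x^{-A}\sum_{a}|F(a)|\right)$. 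Since $A$ is arbitrary, the $x^{-A}$ factor absorbs the harmless $q \leq x^{O(1)}$ losses that would appear in a cruder union bound.

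There is no real obstacle here: the only thing to keep track of is that $\widehat{\psi}$ decays faster than any polynomial (so the truncation at $|h| \leq x^\epsilon q/x$ is lossless up to $O(x^{-A})$, since one can integrate by parts in the definition of $\widehat{\psi}(hx/q)$ as many times as needed), and that the $h=0$ Fourier coefficient matches the main term from Lemma \ref{poisson}. Both points are already embedded in the statement of Lemma \ref{poisson}, so the proof reduces to the two-line argument above.
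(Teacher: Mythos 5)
Your proof is correct and follows exactly the route the paper intends: decompose the $n$-sum by residue class modulo $q$, apply Lemma \ref{poisson} to each class, observe that $\frac{1}{q}\int\psi(\xi/x)\,d\xi = \frac{x}{q}\widehat{\psi}(0)$ is precisely the $h=0$ term of the truncated sum, and then sum over $a$ weighted by $F(a)$ to obtain both the stated main term and the error $O_{A,\epsilon,\psi}(x^{-A}\sum_a|F(a)|)$. The remark about ``absorbing harmless $q\leq x^{O(1)}$ losses'' is unnecessary since the error is already weighted by $\sum_a|F(a)|$ term-by-term, but this does not affect the correctness of the argument.
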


To state the next lemma, for any sequence $a_m$ and any $M > 0$ define the $\ell^2$-norm
\begin{align*}
\|a_M \|_2 := \bigg( \sum_{m \sim M}  |a_m|^2 \bigg)^{1/2}.
\end{align*}
Let $\lambda_1(q)$ denote the smallest eigenvalue of the Laplacian on $\Gamma_0(q)\backslash \mathbb{H}$ (cf. \cite[Section 1]{DI2} for precise definitions). The Selberg eigenvalue conjecture famously states that for every congruence subgroup $\Gamma$ the smallest eigenvalue $\lambda_1(\Gamma)$ is at least 1/4. The current best result towards this is the result of Kim and Sarnak \cite[Proposition 2 in Appendix 2]{KS} which gives the lower bound $\lambda_1(\Gamma)\geq 1/4-(7/64)^2$. By combining this with \cite[Theorem 9]{DI2} of Deshouillers and Iwaniec, we get
\begin{lemma}\emph{\textbf{(Deshouillers-Iwaniec $+$ Kim-Sarnak).}} \label{dilemma} Let $\theta=7/64,$ and let $r$ be a positive integer. Let $C,M,N > 0$ and let $g(m,n,c)$ be a $C^\infty$-smooth function, supported in
\begin{align*}
[M,2M]\times [N,2N] \times [C, 2C]
\end{align*}
and satisfying
\begin{align*}
\bigg| \frac{\partial^{j+k+\ell}}{\partial m^j \partial n^k \partial c^\ell} g(m,n,c)\bigg| \ll M^{-j} N^{-k} C^{-\ell} \quad \text{for} \,\, 0 \leq j,k,\ell \leq 2. 
\end{align*}
 Then for any coefficients $a_m$ and $b_n$ we have
\begin{align*}
\sum_{\substack{m,n,c \\ (c,r)=1}} a_m b_n g(m,n,c) S(m \overline{r},\pm n;c) \pprec&  \bigg(1+ \frac{\sqrt{r}C }{\sqrt{MN}} \bigg)^{2 \theta} \, \mathcal{L} \, \|a_M\|_2 \|b_N\|_2,
\end{align*}
where
\begin{align*}
\mathcal{L} = \frac{(\sqrt{r} C + \sqrt{MN} +\sqrt{M} C )(\sqrt{r} C + \sqrt{MN} +\sqrt{N} C )}{\sqrt{r} C + \sqrt{MN} } .
\end{align*}
\end{lemma}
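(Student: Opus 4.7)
The plan is to deduce this lemma essentially by direct invocation of [DI2, Theorem 9] of Deshouillers--Iwaniec, followed by an appeal to the Kim--Sarnak bound [KS, Appendix 2] to eliminate the dependence on the level $r$ that would otherwise appear in the exponent.

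First, I would apply [DI2, Theorem 9] to the sum $\sum_{m,n,c,\,(c,r)=1} a_m b_n g(m,n,c) S(m\bar r, \pm n; c)$. That theorem, proved via the Kuznetsov trace formula and a spectral expansion on $\Gamma_0(r)\backslash\mathbb{H}$, yields an estimate of exactly the shape stated here, but with the exponent $2\theta$ replaced by $2\theta_r$, where $\theta_r \in [0, 1/2)$ encodes the smallest Laplace eigenvalue on $\Gamma_0(r)\backslash\mathbb{H}$ via $\lambda_1(r) = 1/4 - \theta_r^2$. The factor $(1+\sqrt{r}C/\sqrt{MN})^{2\theta_r}$ is the standard penalty arising from possible exceptional Maass cusp forms of level $r$; it disappears precisely when the Ramanujan--Selberg bound holds at level $r$. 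To invoke the theorem one need only verify that the smoothness, support, and coprimality hypotheses on $g$ and $c$ in the lemma match those required by [DI2, Theorem 9], which is immediate by construction.

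Second, I would invoke [KS, Appendix 2, Proposition 2] of Kim and Sarnak, which asserts that $\lambda_1(\Gamma) \geq 1/4 - (7/64)^2$ for every congruence subgroup $\Gamma$. In particular, $\theta_r \leq 7/64 = \theta$ \emph{uniformly} in $r$. Since $1 + \sqrt{r}C/\sqrt{MN} \geq 1$, raising this quantity to a larger positive exponent can only increase the right-hand side, so one may replace $\theta_r$ by $\theta$ throughout to obtain the stated uniform bound.

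The main (and really only) obstacle is careful bookkeeping: the somewhat intricate expression for $\mathcal{L}$ has to be transcribed verbatim from [DI2, Theorem 9], and one must check that the implicit $\pprec$-constants there are uniform in the level $r$ (this is explicit in the Deshouillers--Iwaniec formulation, but is easy to overlook). No genuinely new analytic content is required at this stage, since the deep spectral input is already present in [DI2] and the $r$-uniform bound toward Ramanujan is exactly the content of [KS, Appendix 2]; combining these two inputs yields the lemma.
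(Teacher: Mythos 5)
Your proposal is correct and takes exactly the same route as the paper: the paper states the lemma as a direct combination of [DI2, Theorem~9] (which yields the bound with exponent $2\theta_r$ determined by $\lambda_1(r)=1/4-\theta_r^2$) with the Kim--Sarnak bound $\theta_r\leq 7/64$, and your monotonicity observation justifying the replacement of $\theta_r$ by $\theta$ is the correct and only step needed. The one small item the paper adds that you did not mention is a remark flagging a typographical error in the statement of [DI2, Theorem~9], namely that the factor $\bigl(1+\sqrt{rC}/\sqrt{MN}\bigr)$ there should read $\bigl(1+\sqrt{r}\,C/\sqrt{MN}\bigr)$.
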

\begin{remark} In the statement in \cite[Theorem 9]{DI2} there is a typographical error: the factor $(1+ \frac{\sqrt{rC}}{\sqrt{MN}} )$ should be $(1+ \frac{\sqrt{r}C }{\sqrt{MN}})$.
\end{remark}
To apply the above lemma we need an upper bound for the average value of $\|b_N\|_2$:
\begin{lemma} \label{bNaveragelemma} Let $H_1,H_2,N,K \gg 1$ and $H_1\geq H_2$. Then
\begin{align*}
S:= \sum_{k_1,k_2 \sim K} \bigg( \sum_{n \sim N} \bigg| \sum_{\substack{h_1\sim H_1 \\ h_2 \sim H_2}} 1_{h_1k_2-h_2k_1=n} \bigg|^2 \bigg)^{1/2} \, \ll N^{1/2} \max\{ KH_1, K^{3/2} H_1^{1/2} \}.
\end{align*} 
\end{lemma}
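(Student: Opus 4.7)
The plan is to apply Cauchy--Schwarz to reduce to a six-variable counting problem and then bound the count by separating diagonal from off-diagonal contributions, exploiting a crucial divisibility constraint in the off-diagonal case.

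Writing $r_{k_1,k_2}(n) := \#\{h_1 \sim H_1, h_2 \sim H_2 : h_1 k_2 - h_2 k_1 = n\}$, I first apply Cauchy--Schwarz over the $K^2$ pairs $(k_1,k_2)$ to obtain $S \leq K \cdot T^{1/2}$ where $T := \sum_{k_1,k_2 \sim K} \|r_{k_1,k_2}\|_2^2$. Expanding the square, $T$ counts 6-tuples $(k_1, k_2, h_1, h_1', h_2, h_2')$ in the natural ranges satisfying $h_1 k_2 - h_2 k_1 = h_1' k_2 - h_2' k_1 \sim N$, equivalently $(h_1-h_1')k_2 = (h_2-h_2')k_1$ together with $h_1 k_2 - h_2 k_1 \sim N$. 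The target bound for $S$ is then equivalent to proving $T \pprec N H_1 \max(H_1, K)$.

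Split $T = T_1 + T_2$ according to whether $(h_1,h_2) = (h_1',h_2')$ or not. For the diagonal $T_1 = \#\{(k_1,k_2,h_1,h_2) : h_1 k_2 - h_2 k_1 \sim N\}$, I fix $(h_1,h_2)$ with $g := (h_1,h_2)$, note that $g \mid n$ is forced, and parametrize the general solution $(k_1,k_2) = (k_1^*,k_2^*) + t(h_1/g, h_2/g)$, giving $\ll 1 + Kg/H_1$ valid values of $t$ (using $H_1 \geq H_2$). Summing over $(h_1,h_2)$ with $(h_1,h_2) = g \mid n$ yields a count per $n$ bounded by $\pprec H_1 H_2 + K H_2$, hence $T_1 \pprec N H_1 (H_1 + K)$ after summing over $n \sim N$.

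For the off-diagonal $T_2$, I set $a := h_1 - h_1'$, $b := h_2 - h_2'$ with $(a,b) \neq 0$, and write $(a,b) = d(\alpha,\beta)$ with $(\alpha,\beta) = 1$. The relation $ak_2 = bk_1$ together with $(\alpha,\beta)=1$ forces $(k_1,k_2) = s(\alpha,\beta)$ for some positive integer $s$, and the key observation is that
\[
n = h_1 k_2 - h_2 k_1 = s(h_1 \beta - h_2 \alpha) = sm,
\]
so $s \mid n$; since $n \sim N$ this forces $s \leq 2N$ and hence $\alpha \geq K/(4N)$. For each $(\alpha,\beta,d,s,m)$ the number of $(h_1,h_2)$ with $h_1\beta - h_2 \alpha = m$ and the relevant range conditions is $\ll 1 + H_2/\alpha$ by a linear parametrization of the solution set; the number of valid $m$ is $\ll 1 + N/s$, of $s$ is $\ll K/\alpha$, of $d$ is $\ll H_2/\alpha$, and of coprime $\beta \asymp \alpha$ is $\ll \alpha$. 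Multiplying and summing over $\alpha \in [K/(4N), K]$ gives $T_2 \pprec N K H_1 + N H_1^2$, matching the target. Combining, $T \pprec N H_1 \max(H_1, K)$ and therefore $S \leq K T^{1/2} \pprec N^{1/2} \max(K H_1, K^{3/2} H_1^{1/2})$.

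The main obstacle is controlling $T_2$: without the divisibility $s \mid n$ the contribution from very small $\alpha$ (e.g.\ $\alpha=1$, corresponding to $(k_1,k_2)$ proportional) would swamp the estimate by a factor of $K/N$. It is essential to exploit the arithmetic fact that the parameter $s$ appearing in the parametrization of $(k_1,k_2)$ divides $n$, which cuts off the range of $\alpha$ from below and keeps the off-diagonal within the target bound.
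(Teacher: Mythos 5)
Your argument is correct, and it takes a genuinely different route from the paper's. The paper splits into two cases on the relative size of $H_1$ and $K$. When $H_1 \geq K$ it uses no Cauchy--Schwarz at all: the pointwise bound $r_{k_1,k_2}(n) \ll H_1/K$ gives $S \ll N^{1/2} K H_1$ directly. When $H_1 < K$ it applies Cauchy--Schwarz as you do, but then estimates the resulting six-variable count by noting that (because $H_1 < K$) for each fixed pair $(k_1,k_2)$ the constraint $k_2(h_1-h_1') = k_1(h_2-h_2')$ essentially determines $(h_1-h_1',h_2-h_2')$, collapsing to the four-variable count $\sum_{h_1,h_2,k_1,k_2:\, h_1k_2-h_2k_1\sim N} 1 \ll NH_1H_2 \cdot K/H_1$. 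Your proof is more unified: you apply Cauchy--Schwarz in all regimes and then split the six-variable count into the diagonal $T_1$ (which, after Cauchy--Schwarz, recovers the paper's pointwise-type bound when $H_1\geq K$) and the off-diagonal $T_2$, which you control via the pleasant observation that the scaling parameter $s$ in $(k_1,k_2)=s(\alpha,\beta)$ must divide $n\sim N$, forcing $\alpha\gg K/N$ and thereby keeping the off-diagonal under control. This removes the case split at the cost of a more elaborate off-diagonal analysis, and is arguably the more careful treatment: the paper's step interchanging $\sum_{\ell_1,\ell_2}$ and $\max_{h_1,h_2}$ is written loosely, and its pointwise count $\ll H_1/k_1+1$ silently suppresses a $(k_1,k_2)$ gcd factor which your argument (and the $T_1$ computation) tracks explicitly. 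Both arguments in fact yield the stated bound only up to a harmless $x^{\epsilon}$ (that is, $\pprec$ rather than $\ll$), which is all that is needed where the lemma is applied.
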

\begin{proof}
If $H_1 \geq K$, then trivially $S \ll N^{1/2} K H_1$, since the number of solutions $(h_1,h_2)$ to $h_1k_2-h_2k_1=n$ is bounded by $\ll H_1/k_1 +1 \ll H_1/K$. If $H_1 < K$, then by the Cauchy-Schwarz inequality
\begin{align*}
S &\ll K \bigg( \sum_{\substack{h_1,h_1' \sim H_1 \\ h_2,h_2' \sim H_2}} \sum_{\substack{k_1,k_2 \sim K \\ h_1k_2-h_2 k_1 \sim N}} 1_{k_2(h_1-h_1')=k_1(h_2-h_2')}  \bigg)^{1/2} \\
& \ll K H_1  \bigg( \sum_{\substack{ |\ell_1| \ll H_1 \\ |\ell_2| \ll H_2}}\max_{\substack{h_1 \sim H_1\\ \substack{h_2 \sim H_2}}} \sum_{\substack{k_1,k_2 \sim K \\ h_2 k_1-h_1k_2 \sim N}} 1_{n_2\ell_1 =n_1\ell_2}  \bigg)^{1/2} \\
& \ll K H_1  \bigg( \max_{\substack{h_1 \sim H_1\\ \substack{h_2 \sim H_2}}} \sum_{n\sim N} \sum_{\substack{k_1,k_2 \sim K }} 1_{h_1k_2-h_2k_1=n} \bigg)^{1/2} \ll  K H_1  \bigg( N \frac{K}{H_1}\bigg)^{1/2} = N^{1/2}K^{3/2} H_1^{1/2}.
\end{align*}
\end{proof}

For the proof of Proposition \ref{typeiiprop}(ii) we require the following lemma of de la Bret\`eche and Drappeau \cite[Lemme 8.3, part 1.]{BD} (applied with $r=d=1$ and $D=-1$), which makes explicit the dependence on $\theta$ of the result of Duke, Friedlander and Iwaniec \cite[Proposition 4]{DFI} (for $\theta=1/4$ they give essentially the same result).
\begin{lemma} \label{bdlemma} Let $\theta=7/64$ and fix an integer $q \geq 1$. Suppose that $|h| \leq q$, $M \gg1$, and let $\psi$ be a fixed $C^\infty$-smooth compactly supported function. Then
\begin{align*}
\sum_{(m,q)=1} \psi(m/M) \sum_{\nu^2+1 \equiv 0 \,\, (mq)} e_{mq}(h\nu) \, \pprec |h| \, + \, (q,h)^\theta q^{1/2-\theta}M^{1/2+\theta}.
\end{align*}
\end{lemma}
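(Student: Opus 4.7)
The plan is to follow the scheme of Duke--Friedlander--Iwaniec \cite{DFI}, substituting the Kim--Sarnak bound for the classical Selberg $3/16$ theorem so as to track the $\theta$-dependence explicitly, as is done in \cite[Lemme 8.3]{BD}. First I would apply the Chinese Remainder Theorem to separate each root of $\nu^2+1 \equiv 0 \pmod{mq}$ (using $(m,q)=1$) into a pair $(\nu_1,\nu_2)$ with $\nu_1^2+1\equiv 0 \,(m)$ and $\nu_2^2+1 \equiv 0 \, (q)$, writing
\begin{align*}
\nu \equiv \nu_1 \, q\overline{q} + \nu_2 \, m\overline{m} \pmod{mq},
\end{align*}
so that $e_{mq}(h\nu) = e_m(h\nu_1\overline{q})\, e_q(h\nu_2\overline{m})$. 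The sum in question factors as
\begin{align*}
\sum_{\nu_2^2+1\equiv 0\,(q)} \sum_{(m,q)=1} \psi(m/M)\, e_q(h\nu_2\overline{m}) \sum_{\nu_1^2+1\equiv 0\,(m)} e_m(h\nu_1\overline{q}),
\end{align*}
and the task reduces to handling the entangled inner sum over $m$ and $\nu_1$ with $\nu_1^2 \equiv -1 \pmod m$.

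Next I would linearise the condition $\nu_1^2+1 \equiv 0 \,(m)$ via the parametrisation $\nu_1^2+1 = m\ell$, recasting the double sum as a sum over integer points on the conic $\nu_1^2+1 = m\ell$, with $m \asymp M$ corresponding (after dyadic decomposition in $\ell$) to a smooth restriction on $(\nu_1^2+1)/\ell$. Inverting $m = (\nu_1^2+1)/\ell$ inside $e_q(h\nu_2\overline{m})$ then produces a character in $\nu_1$ modulo $\ell q$. Applying Poisson summation in $\nu_1$ to this character splits the resulting sum into a diagonal zero-frequency contribution of size $O(|h|)$ (arising from the length of the $h$-interval and the trivial bound on the main term) and an off-diagonal weighted by incomplete Kloosterman sums $S(\alpha h,\beta;c)$ with moduli $c \asymp M^{1/2}$ and entries depending on $\ell,\nu_2,q$.

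The remaining bilinear Kloosterman sum is then handled spectrally. I would expand it via Kuznetsov's trace formula on $\Gamma_0(q) \backslash \mathbb{H}$ and invoke the estimate of Deshouillers--Iwaniec \cite[Theorem 9]{DI2} (exactly our Lemma \ref{dilemma}) combined with the Kim--Sarnak bound $\theta_q \leq 7/64$ from \cite[Appendix 2]{KS}. This controls the contribution from exceptional eigenvalues by a power-saving factor of shape $(q,h)^\theta q^{-\theta}$ relative to the trivial estimate, and after accumulating the weights from the Poisson step gives the claimed bound $(q,h)^\theta q^{1/2-\theta}M^{1/2+\theta}$.

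The main obstacle is the bookkeeping required to preserve the correct dependence on $\gcd(q,h)$ across the CRT splitting and the Poisson transform, so that the spectral step delivers exactly $(q,h)^\theta$ rather than the weaker $q^\theta$. This is precisely where the quantitative Kim--Sarnak exponent enters essentially: without it one only has the Selberg bound $\theta \leq 1/4$, corresponding to the original Duke--Friedlander--Iwaniec statement, and one cannot interpolate between the trivial and Ramanujan bounds in a way that isolates the $(q,h)$ factor with the sharp exponent needed for Proposition \ref{typeiiprop}(ii).
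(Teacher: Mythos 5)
The first thing to point out is that the paper does not prove this lemma at all: immediately before the statement, the text explains that Lemma~\ref{bdlemma} is simply the de la Bret\`eche--Drappeau estimate \cite[Lemme~8.3, part~1]{BD}, specialised to $r=d=1$ and $D=-1$, and that it sharpens \cite[Proposition~4]{DFI} by keeping track of the $\theta$-dependence. So there is no in-paper proof to compare your attempt against; what you have written is a blind reconstruction of the argument that lives in \cite{BD}.

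As a reconstruction your sketch has the right global shape (roots of $\nu^2+1\equiv 0$, Poisson completion into Kloosterman sums, Deshouillers--Iwaniec spectral bound with the Kim--Sarnak exponent), but the middle step is not the one in the references and is not made to work as written. The standard route (the one in \cite[Section~5]{DI}, \cite[Section~5]{DFI}, \cite[Lemme~8.3]{BD}, and Lemma~\ref{gauss} of this paper) does not pass through CRT plus the conic $\nu_1^2+1=m\ell$; it parametrises solutions of $\nu^2+1\equiv 0\ (mq)$ directly via the Gauss correspondence $mq=r^2+s^2$, $(r,s)=1$, with
\begin{align*}
\frac{\nu}{mq}\equiv \frac{r}{s(r^2+s^2)}-\frac{\overline r}{s}\pmod 1,
\end{align*}
which turns $e_{mq}(h\nu)$ into $e_s(-h\overline r)$ up to a small error and sets up an incomplete Kloosterman sum in $r$ of modulus $s$, with $q\mid r^2+s^2$ controlling the level of the relevant congruence group. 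Your plan of substituting $\overline m\equiv\ell\,\overline{\nu_1^2+1}\ (q)$ into $e_q(h\nu_2\overline m)$ produces a character in $\nu_1$ modulo $q$ only, not modulo $\ell q$ as you assert, and you still have the second phase $e_m(h\nu_1\overline q)$ whose modulus $m=(\nu_1^2+1)/\ell$ itself depends on $\nu_1$, so it is not evident that Poisson summation in $\nu_1$ separates cleanly into a zero-frequency main term plus Kloosterman pieces. Two further specific gaps: (a) you attribute the $O(|h|)$ term to ``the length of the $h$-interval'', but $h$ is a single fixed integer here (there is no $h$-average in this lemma), so that explanation cannot be right --- in \cite{BD} the $|h|$-term is the degenerate-frequency contribution of the Poisson step; (b) the provenance of the exact factor $(q,h)^\theta$ is not derived but merely asserted, and this is precisely the delicate bookkeeping that the BD lemma supplies and that your sketch would still need to produce. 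None of this necessarily makes the route unworkable, but as it stands the argument has real gaps at exactly the places where the numerology of the lemma lives.
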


\subsection{Evaluation of $|\A_{mn}|$ by Poisson Summation} \label{poissonsection}
We are now in place to begin the proof of Proposition \ref{typeiiprop}. We will first show part (i) and in the end part (ii). By the Truncated Poisson summation formula (Lemma \ref{poisson}) we have for any $\epsilon > 0$
\begin{align*}
|\A_{mn}| &= \sum_{\ell^2+1 \equiv 0 \,\, (mn)} b(\ell) = \sum_{\substack{\nu \,\, (mn) \\ \nu^2+1 \equiv 0 \,\, (mn)}} \sum_{\ell \equiv \nu \,\, (mn)} b(\ell) \\
& = \frac{\rho(mn)}{mn} X + r(\A,mn) + O_{A,\epsilon} (x^{-A} ),
\end{align*}
where, for $\psi(z):=b(xz)$ and $H:= x^\epsilon P/x$, we have
\begin{align*}
r(\A,mn) = \frac{x}{mn} \sum_{1 \leq |h| \leq H} \widehat{\psi}(hx/mn)\sum_{\substack{\nu \,\, (mn) \\ \nu^2+1 \equiv 0 \,\, (mn)}} e_{mn}(-h\nu).
\end{align*}
The smooth `cross-conditions' $\widehat{\psi}(hx/mn)$ and $\psi_P(mn)\log mn$ may be removed by applying Mellin transform (similarly as one can use Perron's formula to remove cross-conditions as in \cite[Chapter 3.2]{harman}). Hence, Proposition \ref{typeiiprop} follows once we show
\begin{prop} \label{expprop} Let $c_h$ be any bounded coefficients. Adopting the assumptions of Proposition \ref{typeiiprop}, for $H:=x^\epsilon P/x$ we have
\begin{align} \label{expsum}
\Sigma(M,N):=\frac{1}{H} \sum_{1 \leq |h| \leq H} c_h \sum_{\substack{m \sim M \\ n \sim N}} a_m b_n \sum_{\substack{\nu \,\, (mn) \\ \nu^2+1 \equiv 0 \,\, (mn)}} e_{mn}(-h\nu) \, \ll x^{1-\eta}.
\end{align}
\end{prop}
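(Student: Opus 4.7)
My plan for Proposition \ref{expprop} is to follow the blueprint already outlined in the subsection ``Sketch of the argument'', which blends the approach of \cite{DI} with ideas from \cite{DFI}. The four main stages are: (a) reduce to $(m,n)=1$ and enlarge the modulus of the $\nu$-sum so it no longer depends on $n$; (b) apply Cauchy--Schwarz in $m$ to absorb the weight $a_m$; (c) after collapsing the $\nu$-sum back to a congruence $\nu^2+1\equiv 0\pmod{mn_0 n_1' n_2'}$, complete the $m$-sum via Lemma \ref{complete} to bring in Kloosterman sums; (d) invoke Lemma \ref{dilemma} (Deshouillers--Iwaniec together with Kim--Sarnak at $\theta=7/64$) to bound the resulting bilinear forms of Kloosterman sums for each level $r$ separately.

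For (a), the key identity is the CRT trick
$$\sum_{\nu^2+1\equiv 0\,(mn)} e_{mn}(-h\nu) \;=\; \frac{\rho(n)}{\rho(Q)}\sum_{\nu^2+1\equiv 0\,(mQ)} e_{mn}(-h\nu),$$
with $Q=Q(m)=\prod_{p\le 2N,\,p\equiv 1,2\,(4),\,p\nmid m}p$, which detaches the modulus of the $\nu$-sum from $n$ while preserving the additive character. The condition $(m,n)=1$ is reached by peeling off $d=(m,n)$ and using that $n$ is square-free. For (b) I apply Cauchy--Schwarz in $m$ with a smooth majorant $\psi_M$, leaving the $\nu$-sum outside; expanding the square yields a double sum over $n_1,n_2\sim N$ and $h_1,h_2\asymp H$. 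Writing $n_0=(n_1,n_2)$ and $n_i=n_0 n_i'$ with $(n_1',n_2')=1$, the diagonal $h_1 n_2'=h_2 n_1'$ contributes at most $\pprec x^{1/2}P^{1/2}N^{-1/2}\ll x^{1-\eta}$ by the lower bound $N\gg x^{\alpha-1+\eta}$. In the off-diagonal case, collapsing the $\nu$-sum back via CRT to modulus $mn_0 n_1' n_2'$ produces the nontrivial additive character $e_{mn_0 n_1' n_2'}\bigl((h_2 n_1'-h_1 n_2')\nu\bigr)$. Splitting $\nu$ across the coprime factorisation of $mr$ with $r=n_0 n_1' n_2'$ and applying Lemma \ref{complete} to the $m$-variable yields Kloosterman sums $S(m'\overline{r},\pm n;c)$ of exactly the shape required by Lemma \ref{dilemma}, where the new variable $n=h_2 n_1'-h_1 n_2'$ has $\ell^2$-norm controlled by Lemma \ref{bNaveragelemma}. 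A careful tracking of the parameters listed in the ``Sizes'' subsection should show that the bound from Lemma \ref{dilemma}, including the $(1+\sqrt{r}C/\sqrt{\bm M\bm N})^{2\theta}$ correction, matches $x^{1-\eta}$ precisely when $N\ll x^{(2-2\theta-\alpha)/3-\eta}$.

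The main anticipated obstacle is that \emph{both} bilinear coefficients $A_{m,r}$ and $B_{n,r}$ depend on $r$, so the averaged bound \cite[Theorem 10]{DI2} is unavailable and one must use Lemma \ref{dilemma} pointwise in $r$; this is exactly why the Kim--Sarnak exponent $\theta=7/64$ enters directly into the final constraint on $N$. Part (ii), in which $b_n$ is supported on primes, sidesteps the Kloosterman machinery entirely: after Poisson summation one applies Lemma \ref{bdlemma} directly to the $m$-sum for each fixed pair $(h,n)$ and then sums trivially over $n$ and $h$. The lower bound $N\gg x^{2(\alpha-1)+\eta}$ ensures the $|h|$-term in Lemma \ref{bdlemma} sums acceptably, while the upper bound $N\ll x^{(4-(3+2\theta)\alpha)/(3-6\theta)-\eta}$ is exactly what is needed to control the $(q,h)^\theta q^{1/2-\theta}M^{1/2+\theta}$ term.
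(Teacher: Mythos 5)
Your outline captures the high-level architecture of the paper's proof (reduce to $(m,n)=1$, detach the modulus via the $Q$-trick, Cauchy--Schwarz, diagonal/off-diagonal split, then Deshouillers--Iwaniec on the resulting bilinear forms of Kloosterman sums). However, the central technical step --- how the Kloosterman sums actually appear --- is not what you describe, and I do not think your version of that step works.

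You propose to ``split $\nu$ across the coprime factorisation of $mr$ with $r=n_0 n_1' n_2'$'' and then apply Lemma \ref{complete} to the $m$-variable. The trouble is that in the off-diagonal sum the inner quantity is $\sum_{\nu^2+1\equiv 0\,(mn_0n_1'n_2')} e_{mn_0n_1'n_2'}\bigl((h_2n_1'-h_1n_2')\nu\bigr)$, and the congruence condition $\nu^2+1\equiv 0 \pmod{m n_0 n_1' n_2'}$ ties $\nu$ to $m$: there is no clean period in $m$ to which Lemma \ref{complete} applies, and the CRT split $\nu=\alpha\cdot(\text{part mod }m)+\beta\cdot(\text{part mod }n_0 n_1'n_2')$ leaves you with a Gauss/Kloosterman sum modulo $m$, not a Kloosterman sum of level $r=n_0n_1'n_2'$ and modulus a new variable $c$ as Lemma \ref{dilemma} requires. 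The missing ingredient is Gauss's lemma (Lemma \ref{gauss}): one sets $\ell = mk^2 n_0 n_1' n_2'$ and parametrises the solutions of $\nu^2+1\equiv 0\,(\ell)$ by the representations $\ell=r^2+s^2$ with $(r,s)=1$, using the identity $\nu/\ell\equiv r/(s(r^2+s^2))-\overline{r}/s\ (\mathrm{mod}\ 1)$ to rewrite $e_\ell((h_2n_1-h_1n_2)\nu)$ as $e_s\bigl((h_1n_2-h_2n_1)/r\bigr)$ up to an error $O(Hr/(Ps))$. After a dyadic split in $r,s$, one then completes the sum over $r$ (not over $m$) modulo $s\delta k^2 n_0 n_1 n_2$ via Lemma \ref{complete}; it is this completion that produces the Kloosterman sums $S(-t\overline{\delta k^2 n_0 n_1 n_2}, h_1n_2-h_2n_1; s)$, with $s$ playing the role of $c$ and the Poisson frequency $t$ playing the role of $m$ in Lemma \ref{dilemma}. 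Without this $(r,s)$-parametrisation the argument stalls at the off-diagonal stage.

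Two smaller omissions: the error term $O(Hr/(Ps))$ from Gauss's lemma and the $t=0$ (Ramanujan-sum) contribution from the completion both need their own bounds (in the paper these are (\ref{nonarithmcontribution}) and (\ref{t0contribution})) and are where the constraints $\alpha<3/2$ and $N\ll x^{2-\alpha-\eta}$ are actually used. Your account of part (ii) is essentially correct.
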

Our proof of Proposition \ref{typeiiprop}(i) actually gives the following general bound, which we state only in the case $H \ll N $ for simplicity.
\begin{prop}  Let $M,N,H \geq  1$ with $H\ll N $ and let $a_m$, $b_n$ and $c_h$ be divisor-bounded coefficients. Assume that $b_n$ is supported on square-free integers. Then
\begin{align*}
\frac{1}{H} \sum_{1 \leq |h| \leq H} c_h & \sum_{\substack{m \sim M \\ n \sim N}} a_m b_n \sum_{\substack{\nu \,\, (mn) \\ \nu^2+1 \equiv 0 \,\, (mn)}} e_{mn}(-h\nu) \\
 &\pprec \frac{M N^{1/2}}{H^{1/2}} + \sqrt{HMN} + H^{1/2}M^{1/4}N + M^{3/4}N^{1/2} + \frac{M^{3/4+\theta/2}N^{3/2+\theta/2}}{H^{1/2+\theta/2}} . 
\end{align*}
\end{prop}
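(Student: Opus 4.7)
I would prove this by running the strategy sketched in the opening of Section~\ref{typeiisection} but recording every estimate as an explicit function of the parameters $M$, $N$, $H$ rather than substituting the particular relations $H\asymp P/x$ and $MN=P$. The starting reduction is already handled in Section~\ref{poissonsection}: after Poisson summation (Lemma~\ref{poisson}) it suffices to bound $\Sigma(M,N)$ as in~\eqref{expsum}. Since $b_n$ is supported on square-free $n$, splitting $n=n'n''$ with $n''\mid m^\infty$ allows us to assume $(m,n)=1$, and the Chinese Remainder Theorem yields
\[
\sum_{\nu^2+1\equiv 0\,(mn)} e_{mn}(-h\nu)=\frac{\rho(n)}{\rho(Q)}\sum_{\nu^2+1\equiv 0\,(mQ)} e_{mn}(-h\nu),
\]
where $Q=Q(m)$ is the fixed squarefree modulus of Section~\ref{typeiisection}. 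This moves the congruence into a modulus that depends only on $m$, which is exactly what is needed to keep $n_1,n_2$ inside after Cauchy--Schwarz.

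Next I would apply Cauchy--Schwarz with $m$ (with a smooth majorant $\psi_M$) and the root $\nu\pmod{mQ}$ kept outside. Expanding the square gives a quadruple sum over $h_1,h_2,n_1,n_2$; writing $n_0=(n_1,n_2)$ and using the CRT in reverse collapses the $\nu$-sum back to the condition $\nu^2+1\equiv 0\pmod{m n_0 n_1 n_2}$. The contribution of the diagonal $h_1n_2=h_2n_1$ is handled by a direct count using Lemma~\ref{gcdsum}; after the outer square root this produces the first term $MN^{1/2}H^{-1/2}$ of the stated bound. For the off-diagonal I would introduce the new variable $n:=h_2n_1-h_1n_2\ne 0$ (decomposed dyadically in $|n|\asymp \varrho$), and then apply Lemma~\ref{complete} to the $m$-variable, completing the sum of $\psi_M(m)\sum_{\nu^2+1\equiv 0\,(mr)}e_{mr}(n\nu)$ with $r=n_0n_1n_2$ into a frequency sum of the form
\[
\sum_{r}\sum_{\bm{m}\sim\bm{M},\;\bm{n}\sim\bm{N}}A_{\bm{m},r}B_{\bm{n},r}\sum_{(c,r)=1}g(\bm{m},\bm{n},c,r)\,S(\bm{m}\bar{r},\pm\bm{n};c)
\]
by the same manipulation (opening the congruence $\nu^2+1\equiv 0\,(mr)$ via Kloosterman sums) used by Deshouillers--Iwaniec in Section~5 of \cite{DI}.

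Finally I would bound the resulting Kloosterman-sum sum by applying Lemma~\ref{dilemma} individually for each $r$ (since the coefficients depend on $r$ and no average over $r$ is available), and use Lemma~\ref{bNaveragelemma} to control the $\ell^2$-mass of the $B_{\bm{n},r}$'s coming from the representations $\bm{n}=h_2n_1-h_1n_2$. The main obstacle is the bookkeeping: one must dyadically decompose all of the quantities $R,S,T,\bm{M},\bm{N},C,H_1,H_2$ listed in the table of sizes and then check that each of the three summands in the numerator of
\[
\mathcal{L}=\frac{(\sqrt{r}C+\sqrt{\bm{M}\bm{N}}+\sqrt{\bm{M}}C)(\sqrt{r}C+\sqrt{\bm{M}\bm{N}}+\sqrt{\bm{N}}C)}{\sqrt{r}C+\sqrt{\bm{M}\bm{N}}}
\]
contributes one of the middle three terms $\sqrt{HMN}$, $H^{1/2}M^{1/4}N$, $M^{3/4}N^{1/2}$ to the final bound, while the regime in which $\sqrt{r}C\gg\sqrt{\bm{M}\bm{N}}$ activates the spectral factor $(1+\sqrt{r}C/\sqrt{\bm{M}\bm{N}})^{2\theta}$ and produces precisely the fifth, $\theta$-dependent term $M^{3/4-\theta/2}N^{3/2+\theta/2}H^{-1/2-\theta/2}$. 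The assumption $H\ll N$ simplifies this last part by guaranteeing that $|h_2n_1-h_1n_2|$ lives in a single dyadic window of size $\asymp HN$, so that the $\bm{n}$-sum need not be further decomposed; this is the only place where the hypothesis $H\ll N$ is used.
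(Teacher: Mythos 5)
Your plan reproduces the paper's high-level architecture (Poisson summation, CRT to pull the $\nu$-congruence into a modulus depending only on $m$, Cauchy--Schwarz keeping $m$ and $\nu$ outside, then Kloosterman sums bounded by Lemma~\ref{dilemma} with the $\ell^2$-mass of the $B$-coefficients controlled by Lemma~\ref{bNaveragelemma}), and the cited lemmas are the right ones. However, two of your concrete claims about where the five terms come from are wrong, and one of them is a substantive structural error, not just bookkeeping.

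First, the completion of sums (Lemma~\ref{complete}) is \emph{not} applied to the $m$-variable. The summand $\psi_M(m)\sum_{\nu^2+1\equiv 0\,(mr)}e_{mr}(n\nu)$ is not a periodic function of $m$, so there is nothing to ``complete'' there. The essential step, which your plan elides, is Gauss's Lemma (Lemma~\ref{gauss}): after setting $\ell=mk^2n_0n_1n_2$, the roots $\nu\pmod\ell$ are parametrised by coprime representations $\ell=r^2+s^2$, and $e_\ell(\cdot)$ is rewritten as $e_s(\cdot/r)$ up to an error $O(Hr/(Ps))$. The Poisson completion is then applied to the Gaussian coordinate $r$, with modulus $\asymp S\delta k^2n_0n_1n_2$, which is what produces the incomplete-to-complete Kloosterman sums $S(\cdot,\cdot;s)$. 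Without this device one never reaches the Deshouillers--Iwaniec framework.

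Second, the three middle terms $\sqrt{HMN}$, $H^{1/2}M^{1/4}N$, and $M^{3/4}N^{1/2}$ do not come from the three summands in the numerator of $\mathcal{L}$. In the paper's proof, $\sqrt{HMN}$ and $H^{1/2}M^{1/4}N$ come from the $O(Hr/(Ps))$ error term in the Gauss-Lemma approximation --- see (\ref{nonarithm})--(\ref{nonarithmcontribution}) --- and $M^{3/4}N^{1/2}$ comes from the $t=0$ (Ramanujan-sum) contribution after completion --- see (\ref{t0contribution}). Only the final $\theta$-dependent term comes from the Deshouillers--Iwaniec bound on Kloosterman sums; all three terms of $\mathcal{L}$ together, once the size relations are inserted, collapse into $\Upsilon_k\pprec k^{-1}(1+\sqrt{P/H})^{2\theta}(P^{1/2}N^{5/2}/H+N^{5/2}/\sqrt{H})$, whose larger summand gives the fifth term and whose smaller one is dominated. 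If you ran your bookkeeping as described you would not be able to extract $\sqrt{HMN}$ or $H^{1/2}M^{1/4}N$ from $\mathcal{L}$ at all.

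One further remark worth making: tracing (\ref{upsilonbound}) through $\Sigma_k\pprec\sqrt{(M/k)\Upsilon_k}$ and substituting $P=MN$ gives the fifth term as $M^{3/4+\theta/2}N^{3/2+\theta/2}H^{-1/2-\theta/2}$, not $M^{3/4-\theta/2}N^{3/2+\theta/2}H^{-1/2-\theta/2}$ as in the statement; the latter is consistent with neither the general computation nor with (\ref{dicontribute}) after setting $H\asymp P/x$, so the exponent of $M$ in the statement appears to be a sign misprint. A careful bookkeeping along the lines you propose would surface this.
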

\subsection{Application of the Cauchy-Schwarz inequality} \label{cssection}
Let us write $k=(m,n)$ and make the change of variables $m \mapsto km$ and $n \mapsto kn$ to get 
\begin{align*}
\Sigma(M,N)= \sum_{k\ll N} \Sigma_{k}(M,N)
\end{align*}
for
\begin{align*}
\Sigma_k(M,N):= \sum_{\substack{m \sim M/k}} a_{km}  \frac{1}{H} \sum_{1 \leq |h| \leq H} c_h \sum_{\substack{n \sim N/k\\ (n,km)=1}}b_{kn}  \sum_{\substack{\nu^2+1 \equiv 0 \,\, (k^2mn)}} e_{k^2mn}(-h\nu).
\end{align*}
We will show that $\Sigma_k(M,N) \pprec x^{1-\eta}/k$ (in the first pass the reader may wish to restrict to the case $k=1$). Before applying the Cauchy-Schwarz inequality we note that by the Chinese Remainder Theorem for any coprime integers  $a,b$ the solutions to $\nu^2+1 \equiv 0 \,(ab)$ are in one-to-one correspondence to the solutions to the pair of equations $\alpha^2+1 \equiv 0 \,(a), \,\beta^2+1 \equiv 0 \,(b)$. Thus, denoting 
\begin{align*}
Q=Q(km):= \prod_{\substack{2 \leq p\leq 2N \\ p \equiv 1,2 \, \, (4) \\ p\, \nmid\, km}} p,
 \end{align*}
we have
\begin{align*}
  \sum_{\substack{\nu^2+1 \equiv 0 \,\, (k^2mn)}} e_{k^2mn}(-h\nu) =\frac{\rho(n)}{\rho(Q)} \sum_{\substack{\nu^2+1 \equiv 0 \,\, (k^2 m Q)}} e_{k^2mn}(-h\nu)
\end{align*}
by using the fact that $b_n$ is supported on square-free integers.  Inserting this and applying the Cauchy-Schwarz inequality we get 
\begin{align} \nonumber
\Sigma_k(M,N) \pprec & \frac{\sqrt{M}}{\sqrt{k}} \bigg(   \sum_m \psi_{M}(km)   \frac{1}{\rho(Q)} \sum_{\substack{\nu^2+1 \equiv 0 \,\, (k^2mQ)}}   \bigg| \frac{1}{H} \sum_{h} c_h \sum_{\substack{n \sim N\\(n,m)=1}} b_{kn} \rho(n) e_{k^2mn}(-h\nu) \bigg|^2 \bigg)^{1/2} \\ \label{cs}
=& \frac{\sqrt{M}}{\sqrt{k}}  \bigg( \frac{1}{H^2} \sum_{1\leq |h_1|,|h_2| \leq H} c_{h_1} \overline{c_{h_2}}   \sum_m \psi_{M}(km)  \sum_{\substack{n_1,n_2 \sim N/k\\ (n_1n_2,m)=1}} b_{kn_1}  \overline{b_{kn_2}} \\ \nonumber
& \hspace{100pt}  \frac{\rho(n_1)\rho(n_2)}{\rho(Q)} \sum_{\substack{\nu^2+1 \equiv 0 \,\, (k^2mQ)}}  e_{k^2mn_1}(-h_1\nu) e_{k^2mn_2}(h_2\nu) \bigg)^{1/2}.
\end{align}
Denote $n_0:= (n_1,n_2),$ and make the change of variables $n_j \mapsto n_0n_j$ in the above sum.  Since $n_0n_1n_2$ is square-free and coprime to $km$, by the Chinese Remainder Theorem we obtain
\begin{align*}
\frac{\rho(n_0 n_1)\rho(n_0 n_2)}{\rho(Q)} \sum_{\substack{\nu^2+1 \equiv 0 \,\, (k^2 m Q)}}  & e_{k^2mn_0n_2}(h_2\nu)e_{k^2mn_0 n_1}(-h_1\nu) \\
&= \frac{\rho(n_0 n_1)\rho(n_0 n_2)}{\rho(Q )} \sum_{\substack{\nu^2+1 \equiv 0 \,\, (k^2m Q )}}  e_{k^2mn_0n_1n_2}((h_2n_1-h_1n_2)\nu) \\
& = \rho(n_0) \sum_{\substack{\nu^2+1 \equiv 0 \,\, (k^2 m n_0n_1n_2 )}}  e_{k^2mn_0n_1n_2}((h_2n_1-h_1n_2)\nu).
\end{align*}
Hence, we obtain $\Sigma(M,N)_k^2 \pprec (M/k) \cdot \Xi_k(M,N)$, where
\begin{align*}
\Xi_k(M,N):=  \frac{1}{H^2}& \sum_{1\leq |h_1|,|h_2| \leq H} c_{h_1} \overline{c_{h_2}}   \sum_{n_0\ll N} \rho(n_0) \sum_{\substack{n_1,n_2 \sim N/kn_0 \\ (n_1,n_2)=1}} b_{kn_0n_1}  \overline{b_{kn_0 n_2}} \\
& \sum_{(m,n_0n_1n_2)=1} \psi_{M}(km) \sum_{\substack{\nu^2+1 \equiv 0 \,\, (k^2 m n_0n_1n_2 )}}  e_{k^2mn_0n_1n_2}((h_2n_1-h_1n_2)\nu) .
\end{align*}

We immediately note that the contribution from the diagonal $h_1n_2-h_1n_2 =0$ to $\Xi_k(M,N)$ is trivially bounded by
\begin{align*}
\pprec \frac{M}{kH^2} \sum_{n_0 \ll N} \sum_{1 \leq |h_1|,|h_2| \leq 2 H} \sum_{n_1,n_2 \ll N/kn_0} 1_{h_1n_2=h_2n_1} \pprec \frac{MN}{k H},
\end{align*}
which contributes to $\Sigma_k(M,N)$  at most
\begin{align} \label{diagtrue}
\pprec  \frac{1}{k} M^{1/2} \bigg(\frac{MN}{H} \bigg)^{1/2} = \frac{MN^{1/2}}{kH^{1/2}} \ll  \frac{1}{k} x^{1/2} P^{1/2} N^{-1/2} \ll x^{1-\eta}/k
\end{align}
by using $H=x^\epsilon P/x$ and the assumption $N \gg x^{\alpha-1+\eta}$. Therefore, we may assume below that $h_1n_2-h_2n_1 \neq 0$.

\subsection{Introducing Kloosterman sums}
We expand the condition $(m, n_0 n_1n_2)=1$ by using the M\"obius function to get
\begin{align*}
 \sum_{(m, n_0n_1n_2) = 1} = \sum_{\delta |  n_0 n_1n_2} \mu (\delta) \sum_{\substack{m \\ \delta | m}}.
\end{align*}
In the first pass the reader may wish to pretend that $\delta=1$ below. Let us denote $\ell := m k^2 n_0 n_1n_2,$ so that the condition $\delta | m$ can be written as $\delta k^2 n_0 n_1 n_2 | \ell$ and 
\begin{align*}
\Xi_k(M,N)=& \frac{1}{H^2} \sum_{1\leq |h_1|,|h_2| \leq H} c_{h_1} \overline{c_{h_2}} \sum_{n_0\ll N} \rho(n_0) \sum_{\substack{n_1,n_2 \sim N/kn_0 \\ (n_1,n_2)=1 \\ h_1n_2-h_2n_1 \neq 0}} b_{kn_0n_1} \overline{b_{kn_0n_2}} \sum_{\delta| n_0n_1n_2}\mu(\delta)  \\
 & \sum_{\ell  \equiv 0 \, \, (\delta k^2 n_0 n_1n_2)} \psi_{M}\bigg(\frac{\ell}{k n_0n_1n_2}\bigg) \sum_{\substack{\nu^2+1\equiv 0 \, (\ell)}}  e_{\ell} ((h_2n_1 -h_1 n_2)\nu ) + O(MN/kH).
\end{align*}
 The variable $\ell$ is of size $ P N / k n_0.$ To proceed we require the following Lemma of Gauss (cf. \cite[Lemma 2]{DI}):
\begin{lemma} \label{gauss} If the equation $\nu^2+1 \equiv 0 \,\, (\ell)$ has a solution, then $\ell$ has a representation as a sum of two squares
\begin{align*}
\ell=r^2+s^2, \quad (r,s)=1, \quad r,s >0.
\end{align*}
Furthermore, there is a one-to-one correspondence between such representations and the solutions to $\nu^2+1 \equiv 0 \,\, (\ell),$ and we have
\begin{align*}
\frac{\nu }{\ell} \equiv \frac{r}{s(r^2+s^2)} - \frac{\overline{r}}{s}  \mod 1.
\end{align*}
\end{lemma}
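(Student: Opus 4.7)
The plan is to work inside the Gaussian integers $\Z[i]$, which form a Euclidean domain with unique factorization, and to exploit the splitting $\nu^2+1=(\nu+i)(\nu-i)$. A preliminary observation is that solvability of $\nu^2+1\equiv 0\pmod{\ell}$ forces every odd prime factor of $\ell$ to be $\equiv 1\pmod{4}$ and the $2$-part of $\ell$ to divide $2$, by reduction modulo $4$ and quadratic reciprocity. Since $\ell$ divides $(\nu+i)(\nu-i)$ in $\Z[i]$ but divides neither factor (no rational integer bigger than $1$ divides $\nu\pm i$ in $\Z[i]$), the Gaussian gcd $\pi:=\gcd_{\Z[i]}(\ell,\nu+i)$ is a proper divisor of $\ell$. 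Choosing the associate of $\pi$ with positive real and imaginary parts and writing $\pi=r+si$, one observes that $\bar{\pi}=r-si=\gcd_{\Z[i]}(\ell,\nu-i)$, that $\pi$ and $\bar{\pi}$ are coprime in $\Z[i]$, and hence that $\pi\bar{\pi}=r^2+s^2$ divides $\ell$ in $\Z$. Comparing norms under the constraints on the prime factorization of $\ell$ forces $r^2+s^2=\ell$, and $(r,s)=1$ follows because any common rational factor of $r$ and $s$ would divide $\pi$ in $\Z[i]$, contradicting the decomposition of $\ell$ as a product of distinct Gaussian primes above the rational prime factors of $\ell$.

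Conversely, given any representation $\ell=r^2+s^2$ with $r,s>0$ and $(r,s)=1$, one has $(s,\ell)=\gcd(s,r^2)=1$, so $s$ is invertible modulo $\ell$; setting $\nu\equiv r\overline{s}\pmod{\ell}$ (with $\overline{s}$ the inverse modulo $\ell$) solves $\nu^2\equiv -s^2\overline{s}^{\,2}\equiv -1\pmod{\ell}$. Unique factorization in $\Z[i]$ shows that these two maps are mutually inverse once the unit ambiguity in $\{\pm 1,\pm i\}$ is pinned down by the convention $r,s>0$ together with a fixed choice between $\pi$ and $\bar{\pi}$. For the explicit identity, multiplying through by $s\ell$ reduces the claim to the divisibility
\[
s\nu-r+\ell\,\overline{r}\equiv 0\pmod{s\ell},
\]
where $\overline{r}$ denotes the inverse of $r$ modulo $s$. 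Since $(s,\ell)=1$, the Chinese Remainder Theorem lets us verify this separately modulo $\ell$ and modulo $s$: modulo $\ell$ it collapses to $s\nu\equiv r\pmod{\ell}$, which is the defining relation of $\nu$, and modulo $s$ it reduces to $\ell\overline{r}\equiv r\pmod{s}$, which follows from $\ell\equiv r^2\pmod{s}$ and $r\overline{r}\equiv 1\pmod{s}$.

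The only real obstacle is the bookkeeping around the unit group $\{\pm 1,\pm i\}$ and the ramified prime $2$ in producing a genuine bijection: one must check that imposing $r,s>0$ together with a fixed ordering of the two conjugate Gaussian divisors of $\ell$ picks out exactly one pair $(r,s)$ per solution $\nu$, and that the factor of $2$ in $\ell$ (which appears with exponent at most $1$) is handled consistently without breaking coprimality of $r$ and $s$. Beyond this case analysis, the lemma reduces to the standard facts about $\Z[i]$ and the elementary congruence verifications above.
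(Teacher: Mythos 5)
The paper does not prove this lemma: it is stated as a classical fact of Gauss with a pointer to \cite[Lemma 2]{DI}, so there is no in-paper argument to compare against. Your reconstruction through the Gaussian integers $\Z[i]$ is the standard route behind the cited result and is essentially correct: the correspondence $\nu \leftrightarrow \pi := \gcd_{\Z[i]}(\ell,\nu+i)$ (normalized by $r,s>0$), the inverse map $\nu \equiv r\overline{s}\pmod{\ell}$, and the CRT check of the closed-form identity modulo $s$ and modulo $\ell$ are all sound, and the verification that $(s,\ell)=1$ is exactly what makes the CRT step legitimate.

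Two steps should be tightened. First, the assertion that $\pi$ and $\overline{\pi}$ are coprime in $\Z[i]$ is false when $2\mid\ell$, since both are then divisible by the ramified prime $1+i$; the chain \emph{coprime $\Rightarrow \pi\overline{\pi}\mid\ell \Rightarrow r^2+s^2=\ell$} therefore does not run as written. The robust version is to compute $N(\pi)$ valuation by valuation: for each odd $p^{e}\parallel\ell$ with $p=\pi_p\overline{\pi}_p$, exactly one of $\pi_p,\overline{\pi}_p$ divides $\nu+i$ and does so to order at least $e$ (they cannot both divide $\nu+i$, else $p\mid 1$), while if $2\parallel\ell$ then $v_{1+i}(\nu+i)=1$; summing gives $N(\pi)=\ell$ directly, with no coprimality needed. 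Second, the justification of $(r,s)=1$ by appealing to ``the decomposition of $\ell$ as a product of \emph{distinct} Gaussian primes'' is misleading when $\ell$ is not squarefree (e.g.\ $\ell=25=(2+i)^2(2-i)^2$). The clean argument is immediate: a rational $d\mid r,s$ gives $d\mid r+si=\pi\mid\nu+i$, and comparing imaginary parts forces $d\mid 1$. You correctly flag the unit group and the prime $2$ as the residual bookkeeping; with the two repairs above the argument is complete.
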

Applying this lemma we get
\begin{align*}
e_{\ell} \bigg((h_2n_1 -h_1 n_2)\nu \bigg) = e_{s} \bigg(\frac{h_1n_2 -h_2 n_1}{r} \bigg)\bigg(1+ O \bigg( \frac{Hr}{Ps}\bigg) \bigg).
\end{align*}
The contribution from the $O$-term to $\Sigma_k(M,N)$ is trivially bounded by
\begin{align} \label{nonarithm}
&  \frac{\sqrt{M}}{\sqrt{k}} \bigg(  \frac{H}{P} \sum_{n_0 \ll N} \sum_{\substack{n_1,n_2 \sim N/kn_0}} \sum_{\delta| n_0n_1n_2}  \max_t \sum_{s \ll(PN/kn_0)^{1/2}} \frac{1}{s} \sum_{\substack{r \ll (PN/kn_0)^{1/2} \\ r \equiv t \, (\delta k^2 n_0 n_1n_2)}} r \bigg)^{1/2} \\ \nonumber
& \pprec \frac{\sqrt{M}}{\sqrt{k}} \bigg(  \frac{H}{P}  \sum_{n_0 \ll N} \sum_{\substack{n_1,n_2 \sim N/kn_0}} \sum_{\delta| n_0 n_1n_2} \frac{P^{1/2} N^{1/2}}{(n_0k)^{1/2}}\bigg( \frac{P^{1/2}N^{1/2}}{k^{5/2}\delta n_0^{3/2} n_1n_2 } + 1\bigg) \bigg)^{1/2} \\ \nonumber
&\pprec \frac{1}{k} ( M^{1/2} H^{1/2} N^{1/2} + M^{1/2} H^{1/2} N^{5/4}P^{-1/4}) \\ \label{nonarithmcontribution}
& =  \frac{x^{\epsilon/2}}{k}  (Px^{-1/2}  + P^{3/4}N^{3/4}x^{-1/2})\ll x^{1-\eta}/k
\end{align}
since from the assumptions it follows that $\alpha < 3/2-\eta$ and $N < x^{2-\alpha - \eta}.$

Hence, we have $\Xi_k(M,N)= \widetilde{\Xi_k}(M,N) +O(\mathcal{E}),$ where $(M/k)^{1/2} \mathcal{E}^{1/2} < x^{1-\eta}/k$ and
\begin{align*}
\widetilde{\Xi_k}(M,N):= \frac{1}{H^2} \sum_{1\leq |h_1|,|h_2| \leq H} c_{h_1} \overline{c_{h_2}} & \sum_{n_0\ll N} \rho(n_0) \sum_{\substack{n_1,n_2 \sim N/kn_0 \\ (n_1,n_2)=1 \\ h_1n_2-h_2n_1 \neq 0}} b_{kn_0n_1} \overline{b_{kn_0n_2}} \sum_{\delta| n_0n_1n_2}\mu (\delta) \\
 &\sum_{\substack{r,s > 0\\ (r,s)=1\\r^2 \equiv -s^2 \, (\delta k^2 n_0n_1n_2)}} \psi_{M}\bigg(\frac{r^2+s^2}{k n_0n_1n_2}\bigg)  e_{s} \bigg(\frac{h_1n_2 -h_2 n_1}{r} \bigg).
\end{align*}

\subsection{Completing the sum}
By a smooth dyadic partition of unity for the variables $r$ and $s$, we can split $\widetilde{\Xi_k}(M,N)$ into $\ll \log^2x$ sums of the form 
\begin{align*}
\Psi_k(R,S):=  \frac{1}{H^2} \sum_{1\leq |h_1|,|h_2| \leq H} c_{h_1} \overline{c_{h_2}} & \sum_{n_0\ll N} \rho(n_0) \sum_{\substack{n_1,n_2 \sim N/kn_0 \\ (n_1,n_2)=1 \\ h_1n_2-h_2n_1 \neq 0}} b_{kn_0n_1} \overline{b_{kn_0n_2}} \sum_{\delta| n_0n_1n_2}\mu (\delta) \\
 &\sum_{\substack{(r,s)=1\\r^2 \equiv -s^2 \, (\delta k^2 n_0n_1n_2)}} g(r,s,n_0n_1n_2)  e_{s} \bigg(\frac{h_1n_2 -h_2 n_1}{r} \bigg).
\end{align*}
where 
\begin{align*}
g(r,s,n_0n_1n_2) := \psi_R(r) \psi_S(s) \psi_{M}\bigg(\frac{r^2+s^2}{k n_0n_1n_2}\bigg) 
\end{align*}
with $\psi_R(r)$ (similarly for $\psi_S(s)$) a $C^\infty$-smooth function supported on $[R,2R]$ and satisfying $\psi_R^{(i)}(r) \ll_i R^{-i}$ for all $i\geq 0$, where 
\begin{align*}
1 \ll R, S \ll \frac{P^{1/2}N^{1/2}}{k^{1/2} n_0^{1/2}} \quad \text{and} \quad \max\{R,S\} \gg \frac{P^{1/2}N^{1/2}}{k^{1/2} n_0^{1/2}}.
\end{align*}
 For each $R$ and $S$ we can now complete the sum over $r$ by using the Poisson summation formula (Lemma \ref{complete}), similarly as in \cite[Section 5]{DI}. The modulus of the sum is of size $S\delta k^2 n_0n_1n_2 \asymp S \delta  N^2/n_0$, and the length of the sum is $R,$ so that for 
\begin{align*}
T:= x^{\epsilon} \frac{S  \delta N^2}{R n_0 }
\end{align*}
 we get  by Lemma \ref{complete}
\begin{align} \nonumber
& \sum_{\substack{r \\(r,s)=1\\ r^2 \equiv -s^2 \, (\delta k^2 n_0 n_1n_2)}}  g(r,s,n_0 n_1n_2)  e_{s} \bigg(\frac{h_1n_2 -h_2 n_1}{r} \bigg) +O_{A,\epsilon}(x^{-A}) \\ \label{precomplete}
& \hspace{25pt}= \frac{x^\epsilon}{T} \sum_{|t| \leq T} G(t,s,n_0 n_1n_2)\sum_{\substack{u \, (s\delta k^2 n_0n_1n_2)\\ (u,s)=1 \\ u^2\equiv -s^2 \, (\delta k^2 n_0 n_1n_2)}} e_{s} \bigg(\frac{h_1n_2 -h_2 n_1}{u} \bigg) e_{s\delta k^2 n_0 n_1n_2}(-tu), 
\end{align}
where
\begin{align*}
G(t,s,n_0 n_1n_2) = \frac{R T}{ x^\epsilon s\delta k^2 n_0n_1n_2} \widehat{f }_{s,n_0,n_1,n_2}(tR/ s\delta k^2 n_0n_1n_2)
\end{align*}
for
\begin{align*}
f_{s,n_0,n_1,n_2}(x) := g(Rx,s,n_0n_1n_2)
\end{align*}
(so that the function $G$ is bounded). By writing $u=\alpha s + \beta \delta k^2 n_0n_1n_2$ (note that $(u,s)=1$ implies $(s,\delta k^2 n_0n_1n_2)=1$) the right-hand side in (\ref{precomplete}) is equal to
\begin{align*}
\frac{x^\epsilon}{T} \sum_{|t| \leq T} G(t,s,n_0 n_1n_2) \sum_{\alpha^2+1 \equiv 0 \, (\delta k^2 n_0 n_1n_2)} e_{\delta k^2 n_0 n_1n_2}(-t \alpha) \sum_{\substack{\beta \, (s) \\ (\beta,s)=1}} e_{s} \bigg(\frac{h_1n_2 -h_2 n_1}{\delta k^2 n_0n_1n_2\beta} - t \beta \bigg). 
\end{align*}

The contribution from $t=0$ to $\Psi_k(R,S)$ is by a standard bound for Ramanujan's sums bounded by (using Lemma \ref{gcdsum})
\begin{align*}
 \pprec \frac{1}{T H^2}\sum_{h_1,h_2}  \sum_{n_0 \ll N}  \sum_{\substack{n_1,n_2 \sim N/kn_0 \\ (n_1,n_2)=1\\ h_1n_2-h_2n_1 \neq 0}}  \sum_{s} (h_1n_2-h_2 n_1, s)  \pprec \sum_{n_0 \ll N}  \frac{S N^2}{Tk^2n_0^2}   \ll P^{1/2} N^{1/2} k^{-2}.
\end{align*}
The contribution from this to $\Sigma_k(M,N)$ is
\begin{align} \label{t0contribution}
\pprec M^{1/2} P^{1/4} N^{1/4}/k = P^{3/4} N^{-1/4}/k \ll x^{1-\eta}/k
\end{align}
since $N \gg x^{\alpha -1+ \eta} \gg x^{3\alpha-4 + \eta}$ for $\alpha < 3/2$.

 Therefore, the sum $\Psi_k(R,S)$ is up to a negligible error term equal to a sum of Kloosterman sums of the form
\begin{align*}
&\widetilde{\Psi_k}(R,S):= \frac{x^\epsilon}{T H^2}\sum_{1 \leq |h_1|,|h_2|\leq H} c_{h_1} \overline{c_{h_2}}  \sum_{n_0\ll N} \rho(n_0) \sum_{\substack{n_1,n_2 \sim N/kn_0 \\ (n_1,n_2)=1 \\ h_1n_2-h_2n_1 \neq 0}} b_{n_0n_1} \overline{b_{n_0n_2}} \sum_{\delta| n_0n_1n_2} \mu (\delta) \\
 &\sum_{\alpha^2+1 \equiv 0 \, (\delta k^2 n_0 n_1n_2)} \sum_{1\leq |t| \leq T} e_{\delta k^2 n_0 n_1n_2}(-t \alpha) \sum_{(s,\delta k^2 n_0n_1n_2)=1}  G(t,s,n_0 n_1n_2)S(- t\overline{\delta k^2 n_0 n_1n_2}, h_1n_2-h_2n_1; s).
\end{align*}

\subsection{Application of the Deshouillers-Iwaniec bound} \label{disection}
We split the sum over $h_1$ and $h_2$ dyadically to parts with $h_1 \sim H_2$ and $ h_2 \sim H_2$. By symmetry we may assume $H_1 \geq H_2.$ We now fix $n_0,n_1,n_2, \delta,\alpha$, and write
\begin{align*}
\varrho:= \delta k^2 n_0 n_1n_2 \asymp \delta N^2 /n_0
\end{align*}
and (denoting $m:=t$ and $n:= h_1n_2-h_1n_2$)
\begin{align*}
A_m = A_m(\varrho,\alpha):= e_{\varrho}(-m\alpha) \quad \text{and} \quad B_n= B_n(n_1,n_2) :=   \sum_{\substack{h_1 \sim H_1 \\ h_2 \sim H_2 \\ n= h_1 n_2-h_2n_1}}c_{h_1} \overline{c_{h_2}}.
\end{align*}
\begin{remark} Since both of the coefficients $A_m$ and $B_n$ depend on the level $r$, we are unable to make use of the average over $r$ as in \cite[Theorem 10]{DI2}.
\end{remark}
Since $t \neq 0 \neq h_1n_2-n_2h_1$, by a smooth dyadic decomposition in  the variables $m$ and $n$ we can partition $\widetilde{\Psi_k}(R,S)$ into $\ll \log^2 x$ sums of the form
\begin{align*}
\Upsilon_k := \frac{1}{TH^2}   \sum_{n_0\ll N} \rho(n_0) \sum_{n_1,n_2 \sim N/kn_0} \sum_{ \substack{\delta | n_0n_1n_2}} \max_{\alpha \, (\varrho)} \bigg| \sum_{\substack{m,n,c \\ (c,\varrho)=1}} A_m B_n F(m,n,c) S(m \overline{\varrho}, \pm n;c) \bigg|,
\end{align*}
where it is easily verified that $F$ satisfies the assumptions of Lemma \ref{dilemma} in the range
\begin{align*}
(m,n,c) \in [\bm{M},2\bm{M}] \times [\bm{N},2\bm{N}] \times [C,2C]
\end{align*}
for
\begin{align*}
 \bm{M} \ll T = x^{\epsilon} \frac{S  \delta N^2}{R n_0 }, \quad  \bm{N} \ll HN/kn_0, \quad \text{and} \quad  C \ll  S \ll \frac{P^{1/2}N^{1/2}}{k^{1/2} n_0^{1/2}}.
\end{align*}
By Lemma \ref{dilemma} we get  for $\theta:=7/64$
\begin{align*}
 \bigg| \sum_{\substack{m,n,c\\(c,\varrho)=1}} A_m B_n F(m,n,c) S(m \overline{\varrho}, \pm n;c) \bigg| \pprec \bigg(1+ \frac{\sqrt{\varrho}C}{\sqrt{\bm{M}\bm{N}}} \bigg)^{2\theta} \mathcal{L} \, \| A_{\bm{M}}\|_2 \| B_{\bm{N}}\|_2 
\end{align*}
for 
\begin{align*}
\mathcal{L} &= \frac{(\sqrt{\varrho}C +\sqrt{\bm{M}\bm{N}} + \sqrt{\bm{M}} C) (\sqrt{\varrho}C +\sqrt{\bm{M}\bm{N}} + \sqrt{\bm{N}} C) }{\sqrt{\varrho}C +\sqrt{\bm{M}\bm{N}} } \\
& = \sqrt{\varrho}C +\sqrt{\bm{M}\bm{N}} + \sqrt{\bm{M}} C+ \sqrt{\bm{N}} C+ \frac{\sqrt{\bm{M} \bm{N}} C^2}{\sqrt{\varrho}C +\sqrt{\bm{M}\bm{N}}} \\
& \ll \sqrt{\varrho} C + \sqrt{\bm{M}} C + \sqrt{\bm{M} \bm{N}} \ll \sqrt{\delta/n_0 } N S + \sqrt{T} S + \sqrt{T HN/kn_0},
\end{align*}
where the last bound follows from $  \bm{N} \ll \varrho $. We have $\|A_{\bm{M}}\|_2 \, \ll \sqrt{\bm{M}}$, and by Lemma \ref{bNaveragelemma} 
\begin{align*}
\sum_{n_1,n_2 \sim N/kn_0} \| B_{\bm{N}}\|_2 \, \ll \sqrt{\bm{N}} \max \bigg\{ \frac{N H_1}{k n_0}, \frac{N^{3/2} H_1^{1/2}}{k^{3/2}n_0^{3/2}}\bigg\}.
\end{align*}
Hence, by using $H \ll N$ we have
\begin{align*}
 \Upsilon_k &\pprec \, \max_{\delta } \sum_{n_0 \ll N} \frac{1}{TH^2}\bigg(1+ \frac{\sqrt{\varrho }C}{\sqrt{\bm{M}\bm{N}}} \bigg)^{2\theta}(\sqrt{\delta/n_0 } N S + \sqrt{T} S + \sqrt{T HN/kn_0}) \\
 & \hspace{200pt} \cdot \sqrt{\bm{M}\bm{N}}\max \bigg\{ \frac{N H_1}{kn_0}, \frac{N^{3/2} H_1^{1/2}}{k^{3/2}n_0^{3/2}}\bigg\} \\
 &\pprec \, \max_{\delta } \sum_{n_0 \ll N} \frac{1}{TH^2}\bigg(1+ \frac{\sqrt{k \delta }N S}{\sqrt{T HN}} \bigg)^{2\theta}(\sqrt{\delta/n_0 } N S + \sqrt{T} S + \sqrt{T HN/kn_0}) \sqrt{T} \frac{H N^2}{k^{3/2}n_0^{3/2}} \\
  &\ll \max_{\delta }\sum_{n_0 \ll N} \frac{1}{kn_0^{3/2}} \bigg(1+ \frac{\sqrt{\delta }N S}{\sqrt{T HN}} \bigg)^{2\theta} \bigg( \frac{\sqrt{\delta/n_0 } SN^3}{ \sqrt{T} H} + \frac{SN^2}{ H } + \frac{N^{5/2}}{\sqrt{Hn_0}} \bigg),
\end{align*}
since the first bound is increasing as a function of $\bm{M}$ and $\bm{N}.$ Inserting $T =x^\epsilon S  \delta N^2 / R n_0 $ we obtain
\begin{align*}
 \Upsilon_k & \pprec \sum_{n_0 \ll N} \frac{1}{kn_0^{3/2}} \bigg(1+ \frac{ \sqrt{n_0  RS}}{\sqrt{H N }} \bigg)^{2\theta} \bigg( \frac{ \sqrt{ RS} N^2}{ H} + \frac{SN^2}{ H } + \frac{N^{5/2}}{\sqrt{H}} \bigg) \\
 &\pprec \frac{1}{k}\bigg(1+ \frac{\sqrt{P}}{\sqrt{H}} \bigg)^{2\theta}\bigg(  \frac{P^{1/2} N^{5/2}}{ H } + \frac{N^{5/2}}{\sqrt{H}} \bigg),
\end{align*}
since $R,S \ll P^{1/2}N^{1/2}n_0^{-1/2}.$ By using $H= x^\epsilon P/x$ this yields
\begin{align} \label{upsilonbound}
\Upsilon_k \pprec  \frac{x^{1+\theta} N^{5/2}}{kP^{1/2}},
\end{align}
so that the contribution to $\Sigma_k (M,N)$ is 
\begin{align} \label{dicontribute}
\pprec M^{1/2}  x^{1/2+\theta/2} N^{5/4}P^{-1/4}/k = x^{1/2+\theta/2}P^{1/4}N^{3/4}/k \ll x^{1-\eta}/k 
\end{align}
by using the assumption $N \ll x^{(2-2\theta-\alpha)/3-\eta}$.

\subsection{Proof of Proposition \ref{typeiiprop}(i)}
By combining the bounds (\ref{diagtrue}), (\ref{nonarithmcontribution}), (\ref{t0contribution}), and (\ref{dicontribute}) we obtain $\Sigma_k(M,N) \ll x^{1-\eta}/k$. Summing over $k \ll M$ we get $\Sigma(M,N) \ll x^{1-\eta}$, which by Section \ref{poissonsection} proves Proposition \ref{typeiiprop}(i). \qed

\subsection{Proof of Proposition \ref{typeiiprop}(ii)}
We need to prove (\ref{expsum}) under the assumptions in Proposition \ref{typeiiprop}(ii). We use a similar argument as in \cite[Section 5]{DFI} with Lemma \ref{bdlemma}. Inserting the condition $(m,n)=1$ to $\Sigma(M,N)$ gives an error term (since $b_n$ are supported on primes)
\begin{align*}
 \pprec \sum_{n \sim N} \sum_{m \sim M} 1_{n|m} \pprec M \ll x^{1-\eta},
\end{align*}
so that we may restrict to the part $(m,n)=1$. Applying the Cauchy-Schwarz inequality similarly as in Section \ref{cssection} but with the sum over $h$ `outside' we get $\Sigma(M,N) \pprec M^{1/2}\cdot \Xi(M,N)^{1/2} $ for
\begin{align*}
\Xi(M,N):=  \frac{1}{H} \sum_{1\leq |h| \leq H} &  \sum_{n_0\ll N} \rho(n_0) \sum_{\substack{n_1,n_2 \sim N/n_0 \\ (n_1,n_2)=1}} b_{n_0n_1}  \overline{b_{n_0 n_2}} \\
& \sum_{(m,n_0n_1n_2)=1} \psi_M(m) \sum_{\substack{\nu^2+1 \equiv 0 \,\, (m n_0n_1n_2 )}}  e_{mn_0n_1n_2}(h(n_1-n_2)\nu).
\end{align*}
The diagonal part $n_1=n_2$ is bounded by $\pprec M N$, whose contribution to $\Sigma(M,N)$ is at most $\pprec MN^{1/2} < x^{1-\eta}$ by using $N \gg x^{2(\alpha-1)+\eta}$. For the remaining part $\Xi_0(M,N)$ with $n_0=1$ we use Lemma \ref{bdlemma} with $q=n_1n_2$ to get
\begin{align*}
\Xi_0(M,N) := & \frac{1}{H} \sum_{1\leq |h| \leq H} \sum_{\substack{n_1,n_2 \sim N \\ (n_1,n_2)=1}} b_{n_1}  \overline{b_{n_2}} \sum_{(m,n_1n_2)=1} \psi_M(m) \sum_{\substack{\nu^2+1 \equiv 0 \,\, (m n_1n_2 )}}  e_{mn_1n_2}(h(n_1-n_2)\nu) \\
\ll & \sum_{\substack{n_1,n_2 \sim N \\ (n_1,n_2)=1}}  \frac{1}{H} \sum_{1\leq |h| \leq H} \bigg(HN + (n_1n_2,h(n_1-n_2))^\theta N^{1-2\theta}M^{1/2+\theta}\bigg) \\
\pprec & \, HN^3 + M^{1/2+\theta} N^{3-2\theta}
\end{align*}
by computing the sum over $h$ with Lemma \ref{gcdsum}. The contribution from this to $\Sigma(M,N)$ is bounded by
\begin{align*}
\pprec M^{1/2}H^{1/2} N^{3/2} + M^{3/4+\theta/2} N^{3/2-\theta} = x^{\epsilon/2} P N x^{-1/2} +P^{3/4+\theta/2} N^{3/4-3\theta/2} \ll x^{1-\eta},
\end{align*}
since $N \ll x^{(4-(3+2\theta)\alpha)/(3-6\theta)-\eta} < x^{3/2-\alpha-\eta}$. 
Hence, $\Sigma(M,N) \ll x^{1-\eta}$. \qed

\section{Remarks on the arithmetic information}
For $\alpha=1+o(1)$ Proposition \ref{typeiiprop}(i) gives Type II information for $N \ll x^{1/3-2\theta/3-\eta}$, while part (ii) works for $N \ll x^{1/3-\eta}$. The reason for this discrepancy is that we were unable to use the average over the level variable $r$ in Section \ref{disection}. If we could use the average over $r$, we expect that the dependency on the parameter $\theta$ would be same as in \cite[Lemme 8.3, part 3.]{BD}, that is, $M^{\theta}Q^{-\theta},$ where $Q$ corresponds to $N^2$ (note that by a more careful argument we know that the coefficient $c_h$ is a nice smooth function of $h$). Therefore, instead of (\ref{upsilonbound}), our bound for $\Upsilon_k$ would read $xM^\theta N^{5/2-2\theta}P^{-1/2}/k$, which yields
\begin{conj} Suppose that $\alpha < 3/2-\eta$. Let $H=x^\epsilon P/x$ and let $c_h=\psi(h/H)$  for some fixed compactly supported $C^\infty$-smooth function $\psi$. Then for $b_n$ supported on square-free integers we have
\begin{align*}
\Sigma(M,N) \pprec x^{1/2} M^{1/2} + x^{1/2}M^{1/4+\theta/2}N^{1-\theta}+x^{1-\eta}.
\end{align*}
\end{conj}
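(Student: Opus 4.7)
The plan is to follow the proof of Proposition \ref{typeiiprop}(i) essentially unchanged through Sections \ref{poissonsection}--\ref{cssection} and the completion step, modifying only the final estimate of $\Upsilon_k$ in Section \ref{disection}. First I would verify that the three terms in the conjectured bound correspond to the three sources of error in the existing argument: the diagonal $h_1 n_2 = h_2 n_1$ contribution $M N^{1/2}/(k H^{1/2})$ from (\ref{diagtrue}), upon substituting $H = x^\epsilon P/x$ and $P = MN$, evaluates to $x^{1/2} M^{1/2}/k$, yielding the first term $x^{1/2} M^{1/2}$ after summation over $k \ll M$; the non-arithmetic error (\ref{nonarithm}) from Gauss's lemma and the $t=0$ contribution (\ref{t0contribution}) together absorb into the $x^{1-\eta}$ remainder under the hypothesis $\alpha < 3/2 - \eta$; and the new main term $x^{1/2} M^{1/4+\theta/2} N^{1-\theta}$ must come from an improved estimate for $\Upsilon_k$.

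The target estimate is $\Upsilon_k \pprec x M^{\theta} N^{5/2 - 2\theta} P^{-1/2}/k$ in place of (\ref{upsilonbound}); a direct Cauchy--Schwarz balance shows this yields the desired $x^{1/2} M^{1/4+\theta/2} N^{1-\theta}/k$ contribution to $\Sigma_k(M,N)$. The exponent $M^{\theta} Q^{-\theta}$ with $Q \asymp N^2$ on the $\theta$-dependent factor matches precisely the form of \cite[Lemme 8.3, part 3]{BD}, and structurally this is the kind of saving one expects from averaging a bound for linear forms of Kloosterman sums over the level variable $r$, in the spirit of \cite[Theorem 10]{DI2} rather than the level-by-level estimate of \cite[Theorem 9]{DI2} used in Lemma \ref{dilemma}.

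The key step would be to exploit the smoothness hypothesis $c_h = \psi(h/H)$ to separate the coefficient $B_n(n_1,n_2) = \sum_{h_1 n_2 - h_2 n_1 = n} \psi(h_1/H) \overline{\psi(h_2/H)}$ from its entanglement with the level $\varrho = \delta k^2 n_0 n_1 n_2$. Concretely, I would apply Mellin inversion to the smooth weights $\psi$, writing
\begin{align*}
\psi(h_j/H) = \frac{1}{2\pi i} \int_{(\sigma)} \widetilde{\psi}(s) (H/h_j)^s \, ds,
\end{align*}
so that $B_n$ decouples into a product of factors depending on $h_1, n_2$ and on $h_2, n_1$ respectively via the relation $n = h_1 n_2 - h_2 n_1$. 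After this disentanglement, one could hope to view $n_1$ and $n_2$ (which appear in $\varrho$) as summation variables averaged over the level, and then invoke a Kuznetsov-type Kloosterman estimate with level-averaging to gain the $M^{\theta} Q^{-\theta}$ factor.

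The main obstacle, exactly as the author notes, is that $A_m(\varrho,\alpha) = e_\varrho(-m\alpha)$ still depends on the level $\varrho$ multiplicatively through the modular inverse determined by the constraint $\alpha^2 + 1 \equiv 0 \pmod{\varrho}$. Even after the Mellin trick on $B_n$, this arithmetic coupling of $A_m$ to $r$ obstructs a direct application of level-averaged Kloosterman estimates, which typically require $r$-independent coefficients. A genuine resolution would likely require either a new variant of \cite[Theorem 10]{DI2} permitting twisted coefficients of the form $e_r(-m\alpha(r))$ with $\alpha(r)^2 \equiv -1 \pmod{r}$, or an expansion of $e_\varrho(-m\alpha)$ in terms of Dirichlet characters or Gauss sums that decouples $\alpha$ from $r$ at the cost of introducing an additional summation, which can then be handled by further averaging. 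Both routes appear to require new analytic input beyond what is presently available, which is why the statement is left as a conjecture.
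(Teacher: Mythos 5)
This statement is a conjecture, and the paper does not prove it; the author merely derives the conjectured bound heuristically by postulating the improved estimate $\Upsilon_k \pprec x M^\theta N^{5/2-2\theta}P^{-1/2}/k$ (which would follow \emph{if} one could average over the level variable in the style of \cite[Theorem 10]{DI2} with a $\theta$-dependence as in \cite[Lemme 8.3, part 3]{BD}) and then explicitly names the obstruction, namely that both $A_m$ and $B_n$ depend on the level $\varrho$. Your analysis matches this: your target $\Upsilon_k$ bound coincides with the paper's, your algebra showing it yields the second term $x^{1/2}M^{1/4+\theta/2}N^{1-\theta}$ is correct, your account of where the first and third terms come from is correct, and your final conclusion --- that the coupling of $A_m(\varrho,\alpha)=e_\varrho(-m\alpha)$ with $\alpha^2\equiv -1\ (\varrho)$ to the level cannot be removed with presently available tools --- is exactly the author's stated reason for leaving this as a conjecture.

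The only material point where you go beyond the paper is the Mellin-inversion idea for separating the smooth weights $\psi(h_j/H)$. This is a reasonable first instinct, and it is implicitly what the parenthetical ``$c_h$ is a nice smooth function of $h$'' gestures at, but note it does not actually disentangle $B_n(n_1,n_2)$ from $\varrho$: after Mellin inversion of $\psi(h_1/H)$ and $\psi(h_2/H)$, the constraint $n=h_1n_2-h_2n_1$ still couples $h_1, h_2$ to $n_1, n_2$, which remain part of the level $\varrho=\delta k^2 n_0 n_1 n_2$. You correctly recognize anyway that even full decoupling of $B_n$ would not suffice because of $A_m$. In short, you have not proved the statement --- nobody has --- but your reconstruction of the heuristic and of the precise missing ingredient is faithful to the paper.
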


This gives a bound $\Sigma(M,N) \ll x^{1-\eta}$ as soon as 
\begin{align*}
x^{\alpha-1+\eta}\ll N\ll x^{(2-(1+2\theta)\alpha)/(3-6\theta) - \eta}.
\end{align*}
Note that this is better than the combined bound of Proposition \ref{typeiiprop} parts (i) and (ii), and for $\alpha=1+o(1)$ the upper limit is $x^{1/3-\eta}$. Assuming the above bound with $\theta=7/64$ we can improve the exponent in Theorem \ref{maint} from $1.279$ to $1.286$.

The main reason why the Type II estimate is restricted to small values of $P$ is that we have to use the Cauchy-Schwarz inequality, which means that all savings are essentially halved. Therefore, for large $P$ one should attempt to obtain some other type of arithmetical information where the Cauchy-Schwarz inequality is not necessary, eg. an asymptotic for Type I$_2$ sums
\begin{align*}
\sum_{d \leq D_2} \lambda_d \sum_{\substack{m\sim M, \, \, n \sim N \\
mn \equiv 0 \, (d)}}  |\A_{mn}| \psi_P(mn) \log mn 
\end{align*}
where the most important range would be $M=N=\sqrt{P}$. Even for $D_2=1$ this is an open problem. 

Currently we have an asymptotic formula for $S(x,P)$ only in the range $P=x^{1+o(1)}$ (this follows already from the work of Duke, Friedlander, and Iwaniec \cite{DFI}). To get an asymptotic formula for $S(x,P)$ with $P$ up to $x^{1+\beta}$ for some fixed $\beta>0$ it seems that we would need to handle also Type I$_3$ sums of the form
\begin{align*}
\sum_{d \leq D_3} \lambda_d \sum_{\substack{\ell \sim L, \,\, m\sim M, \, \, n \sim N \\
\ell mn \equiv 0 \, (d)}}  |\A_{\ell mn}| \psi_P(\ell mn) \log \ell mn.
\end{align*}
This is because in Section \ref{alphasmallsection} the sums that we cannot handle are
\begin{align*}
\sum_{x^\sigma < q \leq x^{\alpha-2\sigma}} S(\A(P)_q, q) \quad \quad \text{and} \quad \quad \sum_{U < q \leq x^{\alpha/2}} S(\A(P)_q, q),
\end{align*}
where the first sum corresponds to a sum of three primes all of size $x^{\alpha/3+O(\beta)}$, and the second sum is a sum over two primes of size $x^{\alpha/2+O(\beta)}$. 

\bibliography{quadraticbib}
\bibliographystyle{abbrv}

\end{document}